\documentclass[a4paper,fleqn]{cas-sc}

\usepackage[numbers, sort&compress]{natbib}
\usepackage{amsmath}
\usepackage{extarrows}

\newtheorem{theorem}{Theorem}
\newtheorem{lemma}[theorem]{Lemma}
\newtheorem{conjecture}[theorem]{Conjecture}
\newtheorem{definition}[theorem]{Definition}
\newtheorem{corollary}[theorem]{Corollary}
\newdefinition{rmk}{Remark}
\newproof{proof}{Proof}
\newproof{pot}{Proof of Conjecture \ref{full}}

\begin{document}
\let\WriteBookmarks\relax
\def\floatpagepagefraction{1}
\def\textpagefraction{.001}

\shorttitle{The Equality Cases For the Laplacian Conjecture of Brouwer}    

\shortauthors{Dongxiu Cai, Zhengbo Chen, Jia Yang, Xiao-Dong Zhang}  

\title [mode = title]{The Equality Cases For the Laplacian Conjecture of Brouwer}

%

\author[1]{Dongxiu Cai}
\ead{diudiutse@sjtu.edu.cn}

\author[1]{Zhengbo Chen}
\ead{czb911@sjtu.edu.cn}

\author[1]{Jia Yang}
\ead{yangjia2020@sjtu.edu.cn}

\author[1]{Xiao-Dong Zhang}
\cormark[1]
\ead{xiaodong@sjtu.edu.cn}

\affiliation[1]{organization={School of Mathematical Sciences, MOE-LSC, SHL-MAC, Shanghai Jiao Tong University},
                addressline={800 Dongchuan Road},
                city={Shanghai},
                postcode={200240},
                country={P.R. China}}

\cortext[cor1]{Corresponding author}

\begin{abstract}
The  Laplacian conjecture of Brouwer  asserts that for any graph \(G\) of order n with \(m\) edges, the sum of the \(k\) largest Laplacian
eigenvalues satisfies \(s_k(G) \le m + \binom{k+1}{2}\) for
$k=1, \ldots, n$.  Later, Li and Guo in 2022  further proposed the full Brouwer's Laplacian spectrum conjecture. Recently,
Kothari and Tudose in 2026 proved the Brouwer's conjecture.  Motivated  by their perfect proof and methods, we proved that for a simple graph of order $n$ with $m$ edges and $1\le k\le n-1$, \(s_k(G) = m + \binom{k+1}{2}\) if and only if $G$ is a threshold
graph with clique number \(k+1\), which confirms the full Brouwer
conjecture  proposed by Li and Guo.
\end{abstract}


\begin{highlights}
\item Complete resolution of the equality case of Brouwer's Laplacian conjecture
\item Novel proof via projection reduction and structural forcing
\item Extremal graphs are precisely the threshold graphs with certain clique number
\end{highlights}

\begin{keywords}
Brouwer's conjecture \sep Laplacian eigenvalues \sep sum of eigenvalues \sep
threshold graphs \sep split graphs \sep 
orthogonal projection 
\end{keywords}
\maketitle
\section{Introduction}

Throughout this paper, all graphs are finite, simple and undirected.
Let \(G\) be a  simple  graph with vertex set \(V(G)\) and edge set \(E(G)\), denoted by
\(
n=|V(G)|
\)
and size
\(
m=e(G)=|E(G)|.
\) For $v\in V(G)$, denote by $N(v)$ the set of neighbors of $v$. Let \(L(G)\) be the  Laplacian matrix of \(G\). 
Since \(L(G)\) is positive semidefinite, its eigenvalues are real and nonnegative. We write them in non-increasing order as
\(
\lambda_1(G)\geq \lambda_2(G)\geq \cdots \geq \lambda_n(G)=0.
\)
For \(1\leq k\leq n\), put
\(
s_k(G):=\sum_{i=1}^k \lambda_i(G).
\)
When no confusion may arise, we simply write \(\lambda_i\) and \(s_k\). Denote by \(\mathbf 1\) the all-one column vector of appropriate size, $I_n$ the identity matrix of size $n$, and $J_n$ the all-one matrix of size $n$.

 Let
\(
d_1\geq d_2\geq \cdots \geq d_n
\)
be the degree sequence of \(G\). Denote by its conjugate degree sequence  $d_1^*\ge\ldots\ge d_n^*$ with
\(
d_i^*(G):=\left|\{j:d_j\geq i\}\right|
\). The Laplacian spectrum is closely related to the structure of a graph. One classical example is Kirchhoff's matrix-tree theorem, which expresses the number of spanning trees of a connected graph in terms of the nonzero Laplacian eigenvalues. Another fundamental result concerns the relation between the Laplacian spectrum and the degree sequence. The Grone--Merris conjecture, proved by Bai and now known as the Grone--Merris--Bai theorem, is stated as follows.
\begin{theorem}[\cite{Grone1994,Bai2011}]
Let $G$ be a simple graph with the Laplacian eigenvalues $\lambda_1\ge\ldots\lambda_n$, the degree sequence $d_1\geq  \cdots \geq d_n$ and the conjugate degree sequence $ d_1^*\ge\ldots\ge d_n^*$. Then
\begin{equation}\label{gm1}
(d_1(G), \ldots, d_n(G))\preceq(\lambda_1(G),\ldots,\lambda_n(G))\preceq (d_1^*(G),\ldots,d_n^*(G)).
\end{equation}
Equivalently,
\[
\sum_{i=1}^k d_i(G)\leq \sum_{i=1}^k \lambda_i(G)\leq \sum_{i=1}^k d_i^*(G),\ \ \
\qquad  \text{for } \ k=1, \ldots, n.
\]
\end{theorem}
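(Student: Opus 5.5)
The statement has two halves of very different depth, and I would treat them separately.

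\emph{Lower bound.} The inequality $\sum_{i=1}^k d_i\le\sum_{i=1}^k\lambda_i$ follows from Schur's majorization theorem. The diagonal of the real symmetric matrix $L(G)$ is exactly the degree sequence. Ky Fan's variational principle gives
\[
s_k=\max\{\operatorname{tr}(P L(G)) : P \text{ an orthogonal projection of rank } k\}.
\]
Taking $P$ to be the coordinate projection onto the $k$ vertices of largest degree gives $s_k\ge d_1+\cdots+d_k$. Equality at $k=n$ holds because both sums equal $\operatorname{tr}L(G)=2m$.

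\emph{Upper bound (Grone--Merris).} Here I would follow Bai's strategy: induction together with a reduction to threshold graphs.

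\begin{enumerate}
\item Verify the result for threshold graphs. For a threshold graph, built by repeatedly adding an isolated or a dominating vertex, the Laplacian spectrum equals the conjugate degree sequence $(d_1^*,\dots,d_n^*)$. I would prove this by induction on the construction. Adding an isolated vertex appends a $0$ to both sequences. Adding a dominating vertex replaces $L$ by $L\oplus 0+(\text{complete join})$: every nonzero eigenvalue orthogonal to $\mathbf 1$ increases by $1$, and the new eigenvalue $n$ appears. The conjugate sequence transforms in exactly the same way.
\item Show that $s_k$ does not decrease under a ``shifting'' operation. Suppose $u,v$ are vertices with $w\in N(u)\setminus N(v)$. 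Moving the edge $uw$ to $vw$ pushes $G$ toward a threshold graph while keeping $m$ fixed. Under this move the conjugate degree sequence only increases in majorization order.
\item Repeat the shifting until a threshold graph $T$ is reached. This would give $s_k(G)\le s_k(T)=\sum_{i\le k} d_i^*(T)$. The remaining task is to relate $\sum_{i\le k} d_i^*(T)$ back to $\sum_{i\le k} d_i^*(G)$.
\end{enumerate}

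\emph{Main obstacle.} Step 2 is the hard part. Eigenvalue sums are not monotone under edge moves in general, so a naive interlacing argument fails. The same problem breaks step 3: each shift pushes $d^*$ upward in majorization order, so the chain would give $s_k(G)\le\sum_{i\le k} d_i^*(T)$ with $\sum_{i\le k} d_i^*(G)\le\sum_{i\le k} d_i^*(T)$, which is the wrong direction to conclude.

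Bai's actual argument instead inducts on the number of edges. It adds edges one at a time and shows that $\sum_{i\le k} d_i^*-s_k$ stays nonnegative, using a careful analysis of the degree-adjusted Laplacian under rank-one updates $L+ (e_u-e_v)(e_u-e_v)^T$. I would attempt that route, using Ky Fan's principle to control the increment of $s_k$ by $\max_P \operatorname{tr}(P(e_u-e_v)(e_u-e_v)^T)\le 2$. I would then match this against the increment of the conjugate-degree sum, which is $1$ or $2$ depending on the degrees of $u$ and $v$. The delicate case is when the increment of the conjugate-degree sum is only $1$; there I would need a genuine spectral argument.

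Realistically, this is a deep published theorem (Bai 2011). In the paper it is cited rather than proved, and I would expect to invoke it rather than reprove it.
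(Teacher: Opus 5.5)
\textbf{Verdict: the right-hand inequality is not proved.} The paper gives no proof of this theorem: it quotes it from Grone--Merris and Bai and uses it as a black box. So your final decision to invoke it is exactly what the paper does. Your argument for the left-hand majorization is correct and complete: Ky Fan with the coordinate projection onto the $k$ vertices of largest degree, plus $\operatorname{tr}L(G)=2m=\sum_i d_i^*$ at $k=n$.

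Read as a proof of $s_k(G)\le D_k(G)$, however, the proposal has a genuine gap, and your diagnosis of where it sits is slightly off. Since $D_k(G)=\sum_{v}\min\{d_G(v),k\}$, adding an edge $uv$ raises $D_k$ by the number of endpoints of degree $<k$ in $G$. That increment is $2$, $1$ \emph{or} $0$, not just $1$ or $2$. The Ky Fan bound only gives $s_k(G+uv)\le s_k(G)+2$, so the edge induction closes only when the increment is $2$.

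What edge moves do give you is a reduction:
\begin{itemize}
\item Deleting an edge whose ends both have degree $\le k$ lowers $D_k$ by exactly $2$ and $s_k$ by at most $2$.
\item Adding a non-edge between two vertices of degree $\ge k$ leaves $D_k$ fixed and cannot lower $s_k$.
\end{itemize}
Exhausting these moves leaves a split graph: the vertices of degree $\ge k$ form a clique and those of degree $\le k$ an independent set. This residual case carries the entire difficulty, and Bai does not handle it by analysing rank-one updates.

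The missing ingredient is Bai's split-graph estimate (his Lemma~6, sharpened in this paper as Lemma~\ref{lem:improved-bai-lemma6}). Take a clique $C$ of size $N$ with $\lambda_N(G)\ge N$. A homotopy argument tracking the invariant subspace of the top $N$ eigenvectors (Bai's Lemmas~8--11) produces $V=(v_{ji})$ with $v_{ji}\le 0$ and every column sum equal to $-1$. Hence $-\operatorname{tr}(AV)\le N$, and therefore $s_N(G)=N(N-1)+e(C,I)-\operatorname{tr}(AV)\le N^2+e(C,I)=D_N(G)$. Your threshold-shifting route cannot replace this step, for the reason you yourself give: it bounds $s_k(G)$ by $D_k(T)\ge D_k(G)$, which is the wrong direction.

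A small slip in step~1: when a dominating vertex is added, \emph{every} eigenvalue whose eigenvector is orthogonal to $\mathbf 1$ increases by $1$, including the extra zeros coming from other components, not only the nonzero ones.
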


Motivated by the Grone--Merris-Bai Theorem, Brouwer and Haemers \cite{Brouwer2012} proposed the following upper bound for partial sums of Laplacian eigenvalues.

\begin{conjecture}[\cite{Berndsen2012}] 
\label{blc}
Let \(G\) be a graph with \(n\) vertices and \(m\) edges. Then
\begin{equation}\label{BrC}
s_k(G)\leq m+\binom{k+1}{2}  \qquad  \text{for } \ k=1, \ldots, n.
\end{equation}
\end{conjecture}

This conjecture has attracted considerable attention in spectral graph theory. The cases \(k=1\), \(k=n-1\) and \(k=n\) are immediate or classical, and the case \(k=2\) was proved in \cite{Haemers2010}. The conjecture has also been verified for several important graph classes,
including trees, unicyclic graphs, bicyclic graphs, threshold graphs, split graphs, cographs and regular graphs; see, for example,
\cite{Haemers2010,DuZhou2012,Berndsen2012}.
Further progress has been obtained under additional structural assumptions, including restrictions involving girth, clique number, vertex cover number, diameter, pendant vertices and arboricity; see
\cite{Chen2018,Chen2019,Cooper2021,Ganie2020}. Lin and Wang \cite{Lin2025} established preservation results for Brouwer's conjecture under several edge-addition operations, showing that the conjecture is inherited by various graphs constructed from two graphs that already satisfy it. Torres and Trevisan \cite{Torres2026} introduced the Brouwer Critical Index, showing that for every graph it suffices to verify Brouwer's inequality at a single graph-dependent index in order to obtain the conjecture for all $k$. Approximate versions and related inequalities were recently developed by Lew, including near-quadratic Brouwer-type bounds obtained via partition-density and star-arboricity methods, as well as bounds involving degree sums and matching numbers; see
\cite{Lew2025,Lew2026,Lew2026Partition}. Recently, Wang, Lin, Zhang and Ye \cite{Wang2026} proved the conjecture for \(k=3\), which also implies the corresponding case \(k=n-4\) via a known complement-type reduction. Most recently, Kothari and Tudose \cite{Kothari2026} gave a proof of Brouwer's conjecture using the Grone--Merris--Bai theorem for split graphs, and also established an equivalence between Brouwer's conjecture and the Grone--Merris--Bai theorem.

Besides the validity of Brouwer's inequality itself, it is natural to ask when equality can occur. Li and Guo \cite{Li2022} proposed a conjectural characterization of all extremal graphs. To state it, we first recall the following family of threshold graphs.

\begin{definition}[\cite{Chen2026}]\label{def:Gkrs}
The family \(\mathcal{G}_{k,r,s}\) consists of all graphs \(G\) whose vertex set
admits a partition
\[
    V(G)=U\sqcup R\sqcup S,
\]
where \(U\) is a clique of order \(k\), \(R\) and \(S\) are independent sets with \(|R|=r\geq 1\),
\(|S|=s\geq 0\). Moreover, every vertex in \(R\) is adjacent to every vertex of
\(U\). The vertices \(v_1,\ldots,v_s\) of \(S\) satisfy
\[
N(v_i) \subsetneq V(K_k), \qquad
N(v_{i+1}) \subseteq N(v_i) \quad (1\leq i < s).
\]This family is exactly the class of threshold
graphs with clique number \(k+1\), as observed in \cite{Chen2026}.
\end{definition}
Li and Guo proved that every graph in this family is extremal for Brouwer's inequality.
\begin{theorem}[\cite{Li2022}, Theorem~2.2]\label{thm:LiGuo-split}
For \(k\geq 1\), \(r\geq 1\), \(s\geq 0\), if \(G\in\mathcal{G}_{k,r,s}\), then
\[
s_k(G) = e(G) + \binom{k+1}{2}.
\]
\end{theorem}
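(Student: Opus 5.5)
The plan is to compute the Laplacian spectrum of a threshold graph directly. Threshold graphs have integer Laplacian eigenvalues given by the conjugate degree sequence, which is the equality case of the Grone--Merris--Bai theorem (Merris' theorem). Then $s_k(G)=\sum_{i=1}^k d_i^*$, and it remains to evaluate this sum for $G\in\mathcal{G}_{k,r,s}$. If we prefer not to quote Merris' result, we can build $G$ recursively by adding isolated or dominating vertices in the order dictated by the nested neighbourhoods. Each step has an explicit effect on the Laplacian spectrum. Adding an isolated vertex appends a $0$. Adding a dominating vertex to a graph on $p$ vertices sends each nonzero $\lambda_i$ to $\lambda_i+1$ and appends the eigenvalue $p+1$, because $L(G\vee K_1)$ acts on $\mathbf 1^{\perp}$ restricted to the old vertices as $L+I$. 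Induction then shows that the spectrum is the conjugate degree sequence.

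Next we compute $d_i^*$ for $1\le i\le k$. Every vertex of $U$ has degree at least $k-1+r\ge k$, since it is adjacent to the rest of $U$ and to all of $R$. Every vertex of $R$ has degree exactly $k$. Every $v\in S$ has $N(v)\subsetneq U$, so $\deg v\le k-1$. Hence for $1\le i\le k$ we get $d_i^*=|\{v:\deg v\ge i\}|$, which equals $k+r+|\{v\in S:\deg v\ge i\}|$ when $i\le k$, with the caveat that a vertex $u\in U$ has degree $k-1+r+|\{v\in S: u\in N(v)\}|\ge i$ automatically. Thus
\[
\sum_{i=1}^k d_i^* = k(k+r)+\sum_{v\in S}\deg v .
\]
On the other hand,
\[
e(G)=\binom{k}{2}+kr+\sum_{v\in S}\deg v ,
\]
because $S$ and $R$ are independent and the only other edges run between $S$ and $U$. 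Subtracting gives $k^2-\binom{k}{2}=\binom{k+1}{2}$, which is the claimed identity.

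The main obstacle is justifying $\lambda_i=d_i^*$ for threshold graphs. That requires fixing a valid construction order: a vertex $u\in U$ is dominating relative to the vertices of $S$ whose neighbourhoods contain it, so the vertices of $U$ are ordered by how many $N(v_j)$ they lie in, and this order is consistent because the neighbourhoods are nested. It also requires carefully tracking the spectrum through each join. A cleaner alternative that avoids the ordering is to sum degrees differently: by the Grone--Merris--Bai theorem, $s_k\le\sum_{i\le k}d_i^*=e+\binom{k+1}{2}$. We then need the reverse inequality, which by Ky Fan's principle reduces to exhibiting a $k$-dimensional subspace on which $\operatorname{tr}(P L P)$ attains this value. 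The natural candidate is the span of the vectors $k\,e_u-\mathbf 1_U$ together with a correction on $R$, chosen to capture the $U$-block of $L$. I would try this only if the recursive computation becomes messy.
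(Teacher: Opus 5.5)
The paper does not prove this statement; it quotes it from Li and Guo. Your proof is correct and self-contained, and its two ingredients match tools the paper uses elsewhere.

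\begin{itemize}
\item Your count $\sum_{i\le k}d_i^*=e(G)+\binom{k+1}{2}$ is Lemma~\ref{lem:berndsen-split} applied to the split partition $C=U$, $I=R\cup S$. This independent set is maximal because $r\ge1$, so every vertex of $U$ has a neighbour in $R$.
\item Merris' identity $\lambda_i=d_i^*$ for threshold graphs is the converse of the equality analysis in Lemma~\ref{lem:improved-bai-lemma6}. There, $s_N=D_N$ is shown to force nested neighbourhoods; here, nested neighbourhoods give $s_k=D_k$ for every $k$.
\end{itemize}

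Within the paper's framework the argument therefore reduces to two lines: $G$ is threshold, so $s_k=D_k$ by Merris, and $D_k=B_k$ by Berndsen. Your direct degree count avoids citing Berndsen and shows exactly where $r\ge 1$ is used: it is what gives every vertex of $U$ degree at least $k$. Your join induction, in turn, avoids citing Merris.

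One wording slip in that induction needs fixing. Adding a dominating vertex to a graph $H$ on $p$ vertices shifts \emph{every} eigenvalue of $L(H)$ on $\mathbf 1^{\perp}$ by $1$, not just the nonzero ones. This includes the extra zeros coming from further components of $H$, and these do occur in your construction. For example, adding one dominating vertex to $r$ isolated vertices gives $K_{1,r}$, whose spectrum is $r+1$, then $1$ with multiplicity $r-1$, then $0$. Your $L+I$ argument actually proves the correct statement. That statement is exactly what matches
$d^*(H\vee K_1)=(p+1,\,d_1^*(H)+1,\ldots,d_{p-1}^*(H)+1,\,0)$.
Shifting only the nonzero eigenvalues would break the induction.

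An explicit valid construction order is:
\begin{itemize}
\item the vertices of $R$, each added as an isolated vertex;
\item then, alternately, $U\setminus N(v_1)$, $v_1$, $N(v_1)\setminus N(v_2)$, $v_2$, $\ldots$, $v_s$, $N(v_s)$, where the vertices of $U$ are added as dominating vertices and those of $S$ as isolated ones.
\end{itemize}
With this order, each $v_j$ ends up adjacent exactly to the $U$-vertices added after it, which is $N(v_j)$. The Ky Fan fallback in your last paragraph is not needed.
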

They further conjectured that these are the only equality cases.
\begin{conjecture}[\cite{Li2022}]
\label{full}
Let \(G\) be a graph of order \(n\).  For $1\leq k\leq n$,
\begin{equation}\label{equforBC}
s_k(G) = e(G) + \binom{k+1}{2} 
\end{equation}
if and only if \(G\in\mathcal{G}_{k,r,s}\) for some
\(r\geq 1\) and \(s\geq 0\); i.e., a threshold graph with clique number
\(k+1\).
\end{conjecture}
Li and Guo \cite{Li2022} verified Conjecture~\ref{full} for \(k=2\), and also checked it by computer for all graphs with at most \(9\) vertices.

Our main goal is to prove Conjecture~\ref{full}. The rest of
the paper is organized as follows. In Section~2, we introduce the necessary notation, review several results of Berndsen \cite{Berndsen2012} concerning the gap function \(f_t(G)\), and recall Bai's lemma on complement graphs. In Section~3, we revisit the argument of Kothari and Tudose \cite{Kothari2026}, with particular attention to the equality conditions, and show that every extremal graph for Brouwer's conjecture must be a split graph. Section~4 is devoted to split graphs: we strengthen Lemma~6 of Bai \cite{Bai2011}, analyze its equality case, and characterize the extremal split graphs. Finally, in Section~5, we combine these ingredients to prove Conjecture~\ref{full}.

\section{Preliminaries}
\subsection{Notation}
We collect here the notation that will be used throughout the paper.

\medskip
\noindent\textbf{General notation.}
\begin{itemize}
\item For a positive integer $r$, $[r]:=\{1,\dots,r\}$.
\item $e_i$ denotes the column vector whose $i$-th entry is $1$ and all other entries are $0$; its dimension will be clear from the context.
\item For a real number $x$, $x_+:=\max\{x,0\}$ (the positive part of $x$).
\item $\operatorname{tr}(A)$ is the trace of a square matrix $A$.
\item An $n\times n$ real matrix $P$ is called an orthogonal projection matrix if $P^2=P$, $P^T=P$.
\item For real numbers \(d_1,\ldots,d_n\), \(\operatorname{diag}(d_1,\ldots,d_n)\) denotes the \(n\times n\) diagonal matrix whose \(i\)-th diagonal entry is \(d_i\).
\end{itemize}

\medskip
\noindent\textbf{Graphs.}
All graphs are finite, simple and undirected.  For a graph $G$ we use the following notation.
\begin{itemize}
\item $\overline G$ is the complement of $G$.
\item $G[S]$ is the subgraph of $G$ induced by the vertex set $S$.
\item For two disjoint vertex subsets $A,B$, $e_G(A,B)$ is the number of edges between $A$ and $B$.  When the graph is clear from the context we simply write $e(A,B)$.  Moreover, if $A=\{x\}$ we abbreviate $e(\{x\},B)$ as $e(x,B)$.
\item For a vertex $y$ and a vertex subset $A$, $N_A(y)$ denotes the set of neighbours of $y$ inside $A$; again the graph is understood from the context.
\item A \emph{split graph} is a graph whose vertex set can be partitioned into a clique $C$ and an independent set $I$. An independent set is called maximal if no vertex outside it can be added while preserving independence; in the above partition, $I$ is maximal if every vertex of 
$C$ has at least one neighbor in $I$.
\end{itemize}

\medskip
\noindent\textbf{Quantities.}
For a graph $G$ of order $n$ with $m$ edges,
\begin{itemize}
\item $\lambda_1(G)\ge\cdots\ge\lambda_n(G)=0$ are the Laplacian eigenvalues of $G$.
\item For $1\le t\le n$, $s_t(G)=\sum_{i=1}^t\lambda_i(G)$ is the sum of the $t$ largest Laplacian eigenvalues.
\item $d_1^*(G)\ge\cdots\ge d_n^*(G)=0$ is the conjugate degree sequence of $G$, and $D_t(G)=\sum_{i=1}^t d_i^*(G)$.
\item $T(G):=\max\{\,t:0\le t\le n,\ d_t\ge t\,\}$ for non-increasing degree sequence $d_1\ge d_2\ldots\ge d_n$ of a graph $G$.
\item $B_t(G)=m+\binom{t+1}{2}$ and $f_t(G)=B_t(G)-D_t(G)$.
\end{itemize}
\subsection{Basic lemmas}
We first recall the analysis of the function \(f_t\).

\begin{lemma}[\cite{Berndsen2012}, Lemmas~1.5 and 1.6]
\label{lem:berndsen-basic}
Let \(G\) be a graph with non-increasing degree sequence
$
d_1\ge d_2\ge\cdots\ge d_n
$
and conjugate degree sequence \(d^*=(d_1^*,\ldots,d_n^*)\).

\begin{enumerate}
\item[(i)] For \(t=1,\ldots,n\),
\[
d_t\ge t
\quad\Longleftrightarrow\quad
d_t^*\ge t.
\]

\item[(ii)] 

$f_t(G)-f_{t-1}(G)=t-d_t^*(G)$  and the sequence \(f_0(G),f_1(G),\ldots,f_n(G)\) attains its minimum at \(T(G)\).

\end{enumerate}
\end{lemma}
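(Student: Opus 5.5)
We prove the two parts directly from the definitions. Throughout, $d_1\ge\cdots\ge d_n$ and $d_t^*=|\{j:d_j\ge t\}|$. The function $t\mapsto d_t^*$ is non-increasing, and for every index $i$ and every $t$ we have the duality $d_i\ge t \iff d_t^*\ge i$. This holds because $d$ is sorted: the set $\{j:d_j\ge t\}$ is an initial segment $\{1,\dots,d_t^*\}$.

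\textbf{Part (i).} Apply the duality with $i=t$. Then $d_t\ge t$ holds if and only if $t\in\{j:d_j\ge t\}$, which holds if and only if $d_t^*\ge t$. No graph structure is needed here, only that the sequence is non-increasing.

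\textbf{Part (ii), the difference formula.} It is immediate from the definitions:
\[
f_t-f_{t-1}=\Bigl(\tbinom{t+1}{2}-\tbinom{t}{2}\Bigr)-\bigl(D_t-D_{t-1}\bigr)=t-d_t^*.
\]

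\textbf{Part (ii), the minimum.} Write $T=T(G)$. We show that $f$ is non-increasing up to $T$ and non-decreasing from $T$ on, so the minimum is attained at $T$.

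For $1\le t\le T$: since $d_t\ge d_T\ge T\ge t$, we have $d_t\ge t$, and part (i) gives $d_t^*\ge t$. Hence $f_t-f_{t-1}\le 0$.

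For $t\ge T+1$: we claim $d_t^*\le t$, which gives $f_t-f_{t-1}\ge 0$. Suppose instead that $d_t^*\ge t+1$.

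\begin{itemize}
\item If $t+1\le n$, the duality with $i=t+1$ gives $d_{t+1}\ge t$.
\item If $t=n$, then $d_n^*\le n$ holds trivially, so nothing needs to be shown.
\end{itemize}

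So it remains to rule out $d_{t+1}\ge t$ when $t\ge T+1$ and $t+1\le n$. Since $t\ge T+1$ and $T$ is maximal, we have $d_t\le t-1$. Then $d_{t+1}\le d_t\le t-1<t$, which contradicts $d_{t+1}\ge t$.

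This completes the proof. The only delicate point is the boundary index in the last step, and it is settled by using the monotonicity of $d$ together with the maximality of $T$.
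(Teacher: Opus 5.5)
Your proof is correct. The paper gives no proof of its own here; it only cites Berndsen. Your argument is the standard one: the duality $d_i\ge t\iff d_t^*\ge i$, the difference formula $f_t-f_{t-1}=t-d_t^*$, and a sign check of $t-d_t^*$ on either side of $T$.

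You can shorten the last step. For $T<t\le n$, the maximality of $T$ gives $d_t<t$, and the contrapositive of part (i) then yields $d_t^*\le t-1$ directly. This removes the detour through $d_{t+1}$ and the separate case $t=n$. It also gives the strict increase $f_t-f_{t-1}\ge 1$ for $t>T$, which the paper relies on later in Lemma~\ref{lem:zeros-of-f}; your version only gives $f_t-f_{t-1}\ge 0$.
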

\begin{lemma}[\cite{Berndsen2012}, Lemma~1.7]
\label{lem:berndsen-split}
Let \(G\) be a split graph with at least one edge. If
$
V(G)=C\sqcup I,
$ such that \(C\) is a clique of $N$ and  \(I\) is a maximal independent set, then
\[
T(G)=N,\;\ f_N(G)=0.
\]
Consequently,
\[
D_N(G)=B_N(G)=e(G)+\binom{N+1}{2}.
\]
\end{lemma}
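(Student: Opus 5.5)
We need to prove Lemma 1.7 of Berndsen: for a split graph with clique $C$ of size $N$ and maximal independent set $I$, we have $T(G)=N$ and $f_N(G)=0$, hence $D_N=B_N$.

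The plan is to compute the degrees and the conjugate degree sequence directly from the split structure. First, every vertex of $C$ has degree at least $N-1$ from inside the clique. Maximality of $I$ gives each clique vertex a neighbour in $I$, so every vertex of $C$ has degree at least $N$. Every vertex of $I$ has all its neighbours in $C$. If some vertex $y\in I$ were adjacent to all of $C$, then $C\cup\{y\}$ would be a clique, and the clique vertices would have no private room to meet $I$. We do not need to exclude this; we only need the bound $d(y)\le N$. Hence at least $N$ vertices have degree $\ge N$, so $d_N\ge N$ and $T(G)\ge N$.

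Next we show $d_{N+1}\le N$, so that $d_{N+1}<N+1$ and $T(G)=N$. Suppose $N+1$ vertices had degree $\ge N+1$. Since every vertex of $I$ has degree at most $N$, all such vertices would lie in $C$, which has only $N$ vertices. This is a contradiction. (We also use the fact that $T(G)=\max\{t: d_t\ge t\}$ is determined by a threshold, since $d_t$ is non-increasing.)

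For $f_N=0$, we compute $D_N=\sum_{i=1}^N d_i^*$ by counting. Here $d_i^*$ is the number of vertices with degree $\ge i$, so $\sum_{i=1}^N d_i^*=\sum_v \min(d(v),N)$. For $v\in C$, $\min(d(v),N)=N$. For $y\in I$, $\min(d(y),N)=d(y)=e(y,C)$. So
\[
D_N=N^2+e(C,I).
\]
On the other hand, $m=\binom N2+e(C,I)$, so
\[
B_N=\binom N2+e(C,I)+\binom{N+1}2=N^2+e(C,I).
\]
Hence $f_N=0$. The hypothesis of at least one edge guarantees $N\ge1$, so the statement is non-vacuous. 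The consequence follows from the definition of $f_N$.

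None of these steps is a serious obstacle. The only delicate point is the identity $\sum_{i\le N} d_i^*=\sum_v\min(d(v),N)$, together with the use of maximality to get $d(v)\ge N$ for every $v\in C$.
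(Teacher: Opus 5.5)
Your proof is correct: maximality of $I$ gives $d(x)\ge N$ for every $x\in C$, the bound $d(y)\le N$ on $I$ forces $d_{N+1}\le N$ and hence $T(G)=N$, and the count $D_N(G)=\sum_{v}\min\{d(v),N\}=N^2+e(C,I)=\binom N2+e(C,I)+\binom{N+1}2=B_N(G)$ gives $f_N(G)=0$. The paper only cites this lemma from Berndsen, but it uses exactly your counting identity for $\sum_{i=1}^N d_i^*(G)$ in the proof of Lemma~\ref{lem:improved-bai-lemma6}, so this is essentially the same argument; your aside about a vertex of $I$ adjacent to all of $C$ plays no role and can be deleted.
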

We shall also need the following immediate refinement of Berndsen's argument.
\begin{lemma}
\label{lem:zeros-of-f}
Let \(G\), \(C\), \(I\), and \(N=|C|\) be as in
Lemma~\ref{lem:berndsen-split}. If \(f_t(G)=0\), then either $t=N$, or $t=N-1$
{and} $d_N^*(G)=N$.

\end{lemma}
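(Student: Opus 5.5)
By Lemma~\ref{lem:berndsen-basic}(ii), the sequence $f_0(G),\dots,f_n(G)$ has increments $f_t-f_{t-1}=t-d_t^*(G)$, and its minimum value is attained at $T(G)$. By Lemma~\ref{lem:berndsen-split}, $T(G)=N$ and $f_N(G)=0$, so $0$ is the minimum value of $f$. The plan is to show that $f$ is strictly larger than its minimum away from $N$, except possibly at $N-1$.

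The key observation is that the increments $t-d_t^*$ are non-decreasing in $t$, because $d_t^*$ is non-increasing. So $f$ is a discrete convex function, and its set of minimizers is a set of consecutive integers containing $N$. We therefore only need to rule out the neighbours of $N$:
\begin{itemize}
\item[$\bullet$] $t=N+1$ is a minimizer exactly when $d_{N+1}^*=N+1$.
\item[$\bullet$] $t=N-1$ is a minimizer exactly when $d_N^*=N$.
\end{itemize}
Once $t=N+1$ is excluded, convexity means that if $f_t=0$ then $t$ lies in a consecutive block of zeros containing $N$ and not containing $N+1$. Such a block also cannot extend to $N-2$: by convexity, $f_{N-2}=0$ together with $f_{N-1}=f_N=0$ would force the increments at $N-1$ and $N$ to both be $0$ while the increment at $N+1$ is positive. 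That is compatible with convexity, so the exclusion of $N-2$ must come from a direct check.

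That check is the following. For every $t\le N$, each clique vertex has degree at least $N-1$, so
\[
d_t^*\ \ge\ |\{v\in C: d(v)\ge t\}| = N \quad\text{for } t\le N-1 .
\]
Hence for $t\le N-1$ the increment $t-d_t^*\le t-N<0$. This gives $f_{N-2}>f_{N-1}$, and more generally $f$ is strictly decreasing up to $N-1$. Consequently $f_t>f_{N-1}\ge 0$ for all $t\le N-2$. Also $f_{N-1}=f_N-(N-d_N^*)=d_N^*-N$, so $f_{N-1}=0$ if and only if $d_N^*=N$.

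For $t\ge N+1$, I will show $d_{N+1}^*\le N$. A vertex of $I$ has all its neighbours in $C$, so its degree is at most $N$. A vertex of $C$ with degree at least $N+1$ is possible, but then $d_{N+1}^*\le N$ anyway, since only clique vertices can have degree at least $N+1$. Hence the increment $(N+1)-d_{N+1}^*\ge 1$, and by monotonicity of the increments $f_t>0$ for all $t>N$.

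Combining the three ranges, the zeros of $f$ are exactly $t=N$, together with $t=N-1$ in the case $d_N^*=N$. The only delicate point is the degree count for $t\ge N+1$, and it is immediate from the split structure. The maximality of $I$ enters only through Lemma~\ref{lem:berndsen-split}, in the fact that $f_N=0$.
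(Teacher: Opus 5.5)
Your proof is correct and follows essentially the paper's argument: show that the increments $t-d_t^*$ are negative for $t\le N-1$ and positive for $t\ge N+1$, then read off $f_{N-1}=d_N^*-N$. The differences are minor: you get the sign below $N$ from the clique degrees $N-1$ alone, where the paper uses maximality of $I$ to get degrees at least $N$, and the sign above $N$ from the direct count $d_{N+1}^*\le N$, where the paper invokes $T(G)=N$; your convexity digression is unnecessary, since $t-d_t^*$ is in fact strictly increasing and so already rules out three consecutive zeros, but your final argument does not depend on it.
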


\begin{proof}
For \(j<N\), all \(N\) vertices of \(C\) have degree at least \(N\), and hence
\[
d_j^*(G)\ge d_N^*(G)\geq N>j.
\]
Therefore
\[
f_j-f_{j-1}=j-d_j^*<0
\qquad (j<N).
\]
For \(j>N=T(G)\), from the definition of $T(G)$,
\[
f_j-f_{j-1}=j-d_j^*>0
\qquad (j>N).
\]
Since \(f_N=0\), the only possible additional zero is \(f_{N-1}\). Moreover,
\[
f_N-f_{N-1}=N-d_N^*,
\]
so
\[
f_{N-1}=0
\quad\Longleftrightarrow\quad
d_N^*=N.
\]
\end{proof}
\begin{lemma}[\cite{Bai2011}, Proposition~12]
\label{lem:complement-GMB}
Let \(G\) be a graph on \(n\) vertices. With the convention
\(s_0(G)=D_0(G)=0\), for \(0\le k\le n-1\),
\[
s_k(G)=D_k(G)\;\Longleftrightarrow s_{n-k-1}(\overline G)=D_{n-k-1}(\overline G).
\]
\end{lemma}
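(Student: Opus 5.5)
The plan is to show that the two differences $s_k(G)-D_k(G)$ and $s_{n-k-1}(\overline G)-D_{n-k-1}(\overline G)$ are in fact \emph{equal}. The claimed equivalence then follows at once. We do this by expressing both $s_{n-k-1}(\overline G)$ and $D_{n-k-1}(\overline G)$ as the same affine function of the corresponding quantity for $G$.

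\textbf{Spectral side.} We use the classical relation $L(\overline G)=nI_n-J_n-L(G)$. Since $L(G)$ and $J_n$ commute and share the eigenvector $\mathbf 1$, we get $\lambda_i(\overline G)=n-\lambda_{n-i}(G)$ for $1\le i\le n-1$, and $\lambda_n(\overline G)=0$. Put $j=n-k-1$, which satisfies $0\le j\le n-1$. Then
\[
s_j(\overline G)=\sum_{i=1}^{j}\bigl(n-\lambda_{n-i}(G)\bigr)=jn-\sum_{i=k+1}^{n-1}\lambda_i(G)=jn-2m+s_k(G),
\]
where we used $\sum_{i=1}^{n-1}\lambda_i(G)=\operatorname{tr}L(G)=2m$. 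The degenerate cases $j=0$ and $k=0$ agree with the convention $s_0=0$.

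\textbf{Degree side.} Counting cells of the Ferrers diagram row by row gives $D_t(G)=\sum_{v\in V(G)}\min\{d_v,t\}$ for every $0\le t\le n$. The degree of $v$ in $\overline G$ is $n-1-d_v$. For $j=n-1-k$ we have
\[
\min\{n-1-d_v,\,n-1-k\}=n-1-\max\{d_v,k\}=n-1-d_v-k+\min\{d_v,k\}.
\]
Summing over all $v$ yields
\[
D_j(\overline G)=n(n-1)-2m-nk+D_k(G)=jn-2m+D_k(G).
\]

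\textbf{Conclusion.} Subtracting the two displays gives $s_j(\overline G)-D_j(\overline G)=s_k(G)-D_k(G)$, which proves the lemma.

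The only step needing care is the complement eigenvalue relation. In particular one must index it correctly so that the $j$ largest eigenvalues of $\overline G$ correspond exactly to $\lambda_{k+1}(G),\dots,\lambda_{n-1}(G)$, with the zero eigenvalue $\lambda_n(G)$ excluded. I expect this bookkeeping to be the main potential pitfall. The conjugate-degree identity, by contrast, is routine once $D_t$ is written as $\sum_v\min\{d_v,t\}$.
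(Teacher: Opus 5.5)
Your proof is correct and complete. The paper gives no argument for this lemma; it imports it directly from Bai's Proposition~12. So your write-up is a self-contained proof, and it is the standard one. You show the stronger identity $s_{n-k-1}(\overline G)-D_{n-k-1}(\overline G)=s_k(G)-D_k(G)$, and the equivalence follows immediately from it.

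\textbf{Spectral side.} The computation via $L(\overline G)=nI_n-J_n-L(G)$ is right. The ordering $n-\lambda_{n-1}(G)\ge\cdots\ge n-\lambda_1(G)\ge 0$ silently uses $\lambda_1(G)\le n$. That holds because $n-\lambda_1(G)$ is an eigenvalue of the positive semidefinite matrix $L(\overline G)$, so leaving it implicit costs nothing.

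\textbf{Degree side.} Your $\min$-computation is equivalent to the conjugate duality $d_i^*(\overline G)=n-d_{n-i}^*(G)$ for $1\le i\le n-1$. This comes from $d_{\overline G}(v)\ge i \iff d_G(v)\le n-1-i$. Phrasing it this way makes the parallel with $\lambda_i(\overline G)=n-\lambda_{n-i}(G)$ exact. The identity $\sum_{i\le n-1}d_i^*(G)=2m=\sum_{i\le n-1}\lambda_i(G)$ then plays the role of the trace. Either form works.
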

\section{Reduction to split graphs}
\subsection{Projection reduction setup}
Throughout this section, let $P=(P_{ij})$ be an $n\times n$ orthogonal projection matrix of rank $k$, where \(1 \leq k\leq n-1\) and $P\mathbf 1=\mathbf{0}$. Then $\text{tr}(P)=k$. For \(i\neq j\), define an $n\times n$ matrix $M=(M_{ij})$ and an $n\times 1$ vector $v=(v_i)$ as follows:
\begin{equation}\label{Mij}
M_{ij}=P_{ii}+P_{jj}-2P_{ij}-1,
\qquad
v_i=nP_{ii}-k.
\end{equation}

\begin{lemma}[\cite{Kothari2026}]\label{v}
Let $P=(P_{ij})$ be an $n\times n$ orthogonal projection matrix of rank $k$, where \(1 \leq k\leq n-1\) and $P\mathbf 1=\mathbf{0}$. Set $v$ as (\ref{Mij}). Then
\begin{enumerate}
    \item[(i)] $ \  \mathbf1^Tv=\sum_{i=1}^nv_i=0,\ \sum_{i\neq j}|v_i-v_j|^2=2n\|v\|^2,\ \sum_{i\neq j}(P_{ii}-P_{jj})^2=\frac{2}{n}\|v\|^2.$
   
    \item[(ii)]$ \ \ \frac{1}{4}\sum_{i\neq j}[(P_{ii}+P_{jj}-2P_{ij})^2-(P_{ii}-P_{jj})^2]=k(k+1).$
\end{enumerate}
\end{lemma}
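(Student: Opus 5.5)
Both parts follow by direct computation from three facts about $P$: $\operatorname{tr}(P)=k$; $P\mathbf 1=\mathbf 0$, so every row sum $\sum_j P_{ij}$ vanishes, and by symmetry every column sum does too; and $\sum_{i,j}P_{ij}^2=\operatorname{tr}(P^TP)=\operatorname{tr}(P^2)=\operatorname{tr}(P)=k$. In each sum over $i\ne j$, the summand vanishes when $i=j$, so I will extend the sum to all ordered pairs $(i,j)$ and use these identities.

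For (i), the first claim is $\sum_i v_i=n\operatorname{tr}(P)-nk=0$. For the second, I expand over all ordered pairs:
\[
\sum_{i,j}(v_i-v_j)^2=2n\|v\|^2-2\Bigl(\sum_i v_i\Bigr)^2=2n\|v\|^2 .
\]
The third claim follows from $P_{ii}-P_{jj}=(v_i-v_j)/n$. Hence the sum equals $\frac{1}{n^2}\cdot 2n\|v\|^2=\frac{2}{n}\|v\|^2$.

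For (ii), I factor the summand as a difference of squares. With $a=P_{ii}-P_{ij}$ and $b=P_{jj}-P_{ij}$, we have $P_{ii}+P_{jj}-2P_{ij}=a+b$ and $P_{ii}-P_{jj}=a-b$. Therefore
\[
(P_{ii}+P_{jj}-2P_{ij})^2-(P_{ii}-P_{jj})^2=4(P_{ii}-P_{ij})(P_{jj}-P_{ij}).
\]
This vanishes for $i=j$, so the left-hand side of (ii) equals $\sum_{i,j}(P_{ii}-P_{ij})(P_{jj}-P_{ij})$ over all pairs. Expanding gives four terms, which I evaluate separately:
\begin{itemize}
\item $\sum_{i,j}P_{ii}P_{jj}=\operatorname{tr}(P)^2=k^2$.
\item $\sum_{i,j}P_{ii}P_{ij}=\sum_i P_{ii}\sum_j P_{ij}=0$, by the row sums.
\item $\sum_{i,j}P_{ij}P_{jj}=0$, by symmetry of $P$ and the same row-sum argument.
\item $\sum_{i,j}P_{ij}^2=k$, by idempotence.
\end{itemize}
The total is $k^2+k=k(k+1)$.

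There is no real obstacle here. The only step needing an idea is the difference-of-squares factorization in (ii), which isolates the vanishing row sums $P\mathbf 1=\mathbf 0$.
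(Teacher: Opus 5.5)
Your proof is correct and complete. The ingredients are $\operatorname{tr}P=k$, the vanishing row and column sums from $P\mathbf 1=\mathbf 0$ and symmetry, $\sum_{i,j}P_{ij}^2=\operatorname{tr}(P^2)=k$, and the factorization $(P_{ii}+P_{jj}-2P_{ij})^2-(P_{ii}-P_{jj})^2=4(P_{ii}-P_{ij})(P_{jj}-P_{ij})$. Together they give both parts, and you correctly read $\sum_{i\neq j}$ as a sum over ordered pairs. That is the convention the statement itself forces (the identity $2n\|v\|^2$ needs it), and it matches the factor $\frac12$ in the proof of Theorem~\ref{reproduce}.

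The paper does not prove this lemma; it only quotes it from Kothari and Tudose, so there is no argument of the paper's to compare yours against. Your direct computation is the verification the lemma needs.
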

\begin{lemma}[\cite{Kothari2026}, Lemma~5.5]\label{5.5}
   Let $P=(P_{ij})$ be an $n\times n$ orthogonal projection matrix of rank $k$, where \(1 \leq k\leq n-1\) and $P\mathbf 1=\mathbf{0}$, $P_{11}\ge P_{22}\ge\ldots\ge P_{nn}$. Set $M$ and $v$ as (\ref{Mij}). For  $r=1, \ldots, n-1$,  let $H_r$ be the split graph on $[n]$ obtained by making $[r]$ a clique, $\{r+1,\cdots,n\}$ an independent set, and adding one edge between $[r]$ and $\{r+1, \ldots, n\}$  across the cut wherever $M_{ij}\geq 0$.   Then
    $$\sum_{i=1}^rv_i+\sum_{i=1}^r\sum_{j=r+1}^n|M_{ij}|=-\text{tr}[L(H_r)-rI_n+\frac{r}{n}J_n]\leq r(n-r).$$
\end{lemma}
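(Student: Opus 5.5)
I read the displayed quantity as $-\operatorname{tr}\bigl[(L(H_r)-rI_n+\tfrac{r}{n}J_n)P\bigr]$. The matrix $P$ has evidently dropped out of the statement as printed: without it the right-hand side would not depend on $P$, and the bound could not hold for all $P$. I will prove it in this form, in two parts: first the identity, by expanding both sides entrywise, then the inequality $\le r(n-r)$, by a spectral bound on the trace against the projection $P$.

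\emph{Step 1 (the identity).} Put $a_{ij}:=P_{ii}+P_{jj}-2P_{ij}=\|P(e_i-e_j)\|^2\ge 0$, so that $M_{ij}=a_{ij}-1$. The standard facts are
\[
\operatorname{tr}(L(H)P)=\sum_{ij\in E(H)}a_{ij},\qquad \operatorname{tr}(P)=k,\qquad \operatorname{tr}(J_nP)=\mathbf 1^TP\mathbf 1=0 .
\]
Hence the right-hand side equals $rk-\sum_{ij\in E(H_r)}a_{ij}$. I then split the edge set of $H_r$ into the clique edges inside $[r]$ and the cut edges, which are exactly the pairs with $M_{ij}\ge 0$. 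For a cut pair, $|M_{ij}|$ equals $a_{ij}-1$ if it is an edge of $H_r$ and $1-a_{ij}$ otherwise. The clique part $\sum_{i<j\le r}a_{ij}$ I rewrite using $\sum_j P_{ij}=0$ and $\sum_j P_{ij}^2=P_{ii}$, which converts sums of $P_{ij}$ over $j\in[r]$ into sums over $j>r$. I also rewrite $\sum_{i\le r}v_i=n\sum_{i\le r}P_{ii}-rk$. Collecting terms should give the claimed identity. This step is routine bookkeeping; I would only double-check signs, and adjust the sign convention in the statement if the algebra forces it.

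\emph{Step 2 (reduce the bound to a spectral inequality).} Let $X:=rI_n-L(H_r)-\tfrac{r}{n}J_n$. Since $P$ is an orthogonal projection with $P\mathbf 1=\mathbf 0$, we have $0\preceq P\preceq I$ on $\mathbf 1^\perp$, so
\[
\operatorname{tr}(XP)\le\sum_{i=1}^{n-1}(r-\lambda_i(H_r))_+ .
\]
On $\mathbf 1^\perp$ the matrix $X$ acts as $rI-L(H_r)$, with eigenvalues $r-\lambda_i$ for $i\le n-1$. Writing $(x)_+=x+(-x)_+$ and $\sum_{i\le n-1}(r-\lambda_i)=r(n-1)-2e(H_r)$, it suffices to show
\[
\sum_{i}(\lambda_i(H_r)-r)_+\le 2e(H_r)-r(r-1).
\]

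\emph{Step 3 (the main obstacle).} The left-hand side equals $\max_t\,(s_t(H_r)-rt)$. By the Grone--Merris--Bai theorem, $s_t(H_r)\le D_t(H_r)$, so it is enough to bound $\max_t\,(D_t(H_r)-rt)$. For the split graph $H_r$ with clique $[r]$, every clique vertex has degree at least $r-1$, so $d_t^*\ge r$ is plausible for $t<r$ and $d_t^*\le r$ for $t>r$. This suggests that the maximum is attained at $t=r$ or $t=r-1$, giving $D_r(H_r)-r^2$.

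It then remains to verify $D_r(H_r)-r^2\le 2e(H_r)-r(r-1)$. Here I would use $D_r(H_r)=\sum_v\min(d_v,r)$, which for a split graph with clique $[r]$ equals $r^2+e([r],\{r+1,\dots,n\})$; this comparison is where Lemma~\ref{lem:berndsen-split} should enter. The needed inequality then reads $e_{\mathrm{cut}}\le 2\binom{r}{2}+2e_{\mathrm{cut}}-r(r-1)=2e_{\mathrm{cut}}$, which is true. I expect the delicate point to be the boundary behaviour at $t=r-1$ versus $t=r$, because clique vertices may have degree exactly $r-1$. If that case causes trouble, I would fall back on bounding $\sum_{j\le t}d_j^*-rt$ directly via the monotonicity in Lemma~\ref{lem:berndsen-basic}(ii).
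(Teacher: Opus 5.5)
The identity you set out to verify in Step~1 is false, and no sign adjustment rescues it. Carrying out the bookkeeping you describe gives the following. Use $\sum_jP_{ij}=0$ and $\sum_{j>r}P_{jj}=k-\sum_{i\le r}P_{ii}$, and let $e_{\mathrm{cut}}$ be the number of cut edges of $H_r$. Then
\[
\sum_{i=1}^r v_i+\sum_{i=1}^r\sum_{j=r+1}^n|M_{ij}|
= r(n-r)-2e_{\mathrm{cut}}+2\bigl(\operatorname{tr}(L(H_r)P)-rk\bigr)
= -\operatorname{tr}\bigl[(L(H_r)-rI_n+\tfrac{r}{n}J_n)(I_n-2P)\bigr].
\]
So $\operatorname{tr}(L(H_r)P)$ enters with coefficient $+2$, not $-1$ as in your $rk-\operatorname{tr}(L(H_r)P)$. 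You are right that the printed statement has lost a factor: as printed, the right-hand side is just $r(n-r)-2e_{\mathrm{cut}}$, and the ``identity'' fails. But the lost factor is $I_n-2P$, not $P$. A two-vertex check makes this concrete. Take $n=2$, $k=1$, $P=\frac12\begin{pmatrix}1&-1\\-1&1\end{pmatrix}$, $r=1$. The left side is $v_1+|M_{12}|=0+1=1$, while $-\operatorname{tr}[(L(H_1)-I_2+\frac12J_2)P]=-1$.

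Because of this, Steps~2--3 bound the wrong quantity in the wrong direction. You prove an upper bound on $rk-\operatorname{tr}(L(H_r)P)$, i.e.\ a lower bound on $\operatorname{tr}(L(H_r)P)$ via the small eigenvalues through $(r-\lambda_i)_+$. By the correct identity, the inequality $\le r(n-r)$ is equivalent to the opposite, upper bound
\[
\operatorname{tr}(L(H_r)P)\le rk+e_{\mathrm{cut}},
\]
and this is where Grone--Merris--Bai enters, applied to the top $k$ eigenvalues:
\begin{itemize}
\item Ky Fan gives $\operatorname{tr}(L(H_r)P)\le s_k(H_r)$.
\item GMB gives $s_k(H_r)\le D_k(H_r)=\sum_v\min(d_v,k)$.
\item Split this sum over the clique $[r]$ (each term at most $k$) and the independent side (each term at most that vertex's number of cut neighbours). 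This yields $D_k(H_r)\le rk+e_{\mathrm{cut}}$.
\end{itemize}
The same crude bound $\sum_v\min(d_v,t)\le rt+e_{\mathrm{cut}}$, valid for every $t$, also removes your Step~3 worry. The equality $D_r(H_r)=r^2+e_{\mathrm{cut}}$ you wanted fails when some clique vertex has no cut neighbour, but it is never needed. The paper itself gives no proof of this lemma; it cites Kothari--Tudose.
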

\begin{lemma}[\cite{Kothari2026}, Lemma~5.6]\label{+v}
Let  $P=(P_{ij})$ be an orthogonal projection matrix of order $n$ of rank $k$, where \(1 \leq k\leq n-1\) and $P\mathbf 1=\mathbf 0$. Set $M$ and $v$ as (\ref{Mij}). Then \[
\sum_{i\neq j}(P_{ii}+P_{jj}-2P_{ij}-1)_+-k(k+1)=\frac{1}{4}\bigl[\frac{2}{n}\|v\|^2-\sum_{i\neq j}(1-|M_{ij}|)^2\bigr]
\]
\end{lemma}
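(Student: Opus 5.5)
The identity is purely algebraic. The plan is to split the positive part as $x_+=\tfrac12(x+|x|)$, expand both sides in terms of the two sums $\sum_{i\neq j}M_{ij}$ and $\sum_{i\neq j}M_{ij}^2$, and evaluate these with Lemma~\ref{v}. Throughout, sums run over ordered pairs $i\neq j$, of which there are $n(n-1)$. Write $a_{ij}=P_{ii}+P_{jj}-2P_{ij}$, so that $M_{ij}=a_{ij}-1$.

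\textbf{Rewriting both sides.} On the left,
\[
\sum_{i\neq j}(M_{ij})_+=\tfrac12\sum_{i\neq j}M_{ij}+\tfrac12\sum_{i\neq j}|M_{ij}|.
\]
On the right, $(1-|M_{ij}|)^2=1-2|M_{ij}|+M_{ij}^2$, so
\[
\tfrac14\sum_{i\neq j}(1-|M_{ij}|)^2=\tfrac14 n(n-1)-\tfrac12\sum_{i\neq j}|M_{ij}|+\tfrac14\sum_{i\neq j}M_{ij}^2 .
\]
With the sign in front of this sum, the term $\tfrac12\sum|M_{ij}|$ appears on both sides and cancels. The claim therefore reduces to the absolute-value-free identity
\[
\tfrac12\sum_{i\neq j}M_{ij}-k(k+1)=\tfrac14\Bigl[\tfrac2n\|v\|^2-n(n-1)-\sum_{i\neq j}M_{ij}^2\Bigr].
\]

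\textbf{Computing $\sum a_{ij}$.} We have $\sum_{i\neq j}(P_{ii}+P_{jj})=2(n-1)\operatorname{tr}P=2(n-1)k$. Since $P\mathbf 1=\mathbf 0$, each row of $P$ sums to zero, so $\sum_{j\neq i}P_{ij}=-P_{ii}$ and hence $\sum_{i\neq j}P_{ij}=-k$. This gives
\[
\sum_{i\neq j}a_{ij}=2nk,\qquad \sum_{i\neq j}M_{ij}=2nk-n(n-1).
\]

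\textbf{Computing $\sum M_{ij}^2$.} Expanding $M_{ij}^2=a_{ij}^2-2a_{ij}+1$ gives $\sum M_{ij}^2=\sum a_{ij}^2-4nk+n(n-1)$. Lemma~\ref{v}(ii) says $\sum a_{ij}^2=4k(k+1)+\sum(P_{ii}-P_{jj})^2$, and Lemma~\ref{v}(i) identifies $\sum(P_{ii}-P_{jj})^2=\tfrac2n\|v\|^2$.

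\textbf{Conclusion.} Substituting, the $\tfrac2n\|v\|^2$ terms cancel. Both sides of the reduced identity then equal
\[
nk-\tfrac12 n(n-1)-k(k+1),
\]
which proves the lemma.

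There is no real obstacle here. The only points needing care are the convention that $i\neq j$ ranges over ordered pairs, and using the row-sum condition $P\mathbf 1=\mathbf 0$ correctly to obtain $\sum_{i\neq j}P_{ij}=-k$.
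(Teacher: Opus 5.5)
Your proof is correct and follows essentially the paper's route: a purely algebraic rearrangement of the summands that reduces everything to Lemma~\ref{v}(i) and (ii). The only difference is that the paper uses the pointwise identity $(x-1)_+=\frac14\bigl(x^2-(1-|x-1|)^2\bigr)$. This makes your separate evaluation of $\sum_{i\neq j}M_{ij}=2nk-n(n-1)$ via the row sums and $\operatorname{tr}P=k$ unnecessary, because pointwise $2M_{ij}=a_{ij}^2-1-M_{ij}^2$, so that linear term cancels anyway.
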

\begin{proof}By Lemma \ref{v} and the identity
$(x-1)_+=\frac{1}{4}\bigl(x^2-(1-|x-1|)^2\bigr)$,
    \begin{align*}
        \sum_{i\neq j}(P_{ii}+P_{jj}-2P_{ij}-1)_+-k(k+1)
        &=\frac{1}{4}\bigl[\sum_{i\neq j}\bigl((P_{ii}+P_{jj}-2P_{ij})^2-(1-|P_{ii}+P_{jj}-2P_{ij}-1|)^2\bigr)
        \\&\quad -\sum_{i\neq j}\bigl((P_{ii}+P_{jj}-2P_{ij})^2-(P_{ii}-P_{jj})^2\bigr)\bigr]
        \\&=\frac{1}{4}\bigl[\frac{2}{n}\|v\|^2-\sum_{i\neq j}(1-|M_{ij}|)^2\bigr]
    \end{align*}
\end{proof}
\subsection{From equality to split graphs}
\begin{lemma}[\cite{Kothari2026}, Lemmas~5.5 and 5.6]\label{Pijrela}
    Let $P=(P_{ij})$ be an $n\times n$ orthogonal projection matrix of rank $k$, where \(1 \leq k\leq n-1\) and $P\mathbf 1=\mathbf 0$. Then $$\sum_{i\neq j}(P_{ii}+P_{jj}-2P_{ij}-1)_+\leq k(k+1),$$with equality if and only if  \begin{equation}\label{Pijij}
1-|P_{ii}+P_{jj}-2P_{ij}-1| = |P_{ii}-P_{jj}|,\qquad \forall\,i\neq j.
\end{equation}
\end{lemma}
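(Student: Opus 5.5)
By Lemma~\ref{+v}, the quantity we want to control is
\[
\sum_{i\neq j}(P_{ii}+P_{jj}-2P_{ij}-1)_+-k(k+1)=\frac14\Bigl[\frac{2}{n}\|v\|^2-\sum_{i\neq j}(1-|M_{ij}|)^2\Bigr].
\]
So it suffices to prove
\[
\sum_{i\neq j}(1-|M_{ij}|)^2\ \ge\ \frac{2}{n}\|v\|^2 = \sum_{i\neq j}(P_{ii}-P_{jj})^2,
\]
where the last equality is Lemma~\ref{v}(i). We also need the equality case. The plan is to prove a termwise inequality
\[
(1-|M_{ij}|)^2\ \ge\ (P_{ii}-P_{jj})^2\qquad (i\neq j),
\]
which gives both the bound and the equality characterization at once.

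Since each term is a difference of squares, the termwise inequality amounts to showing $|1-|M_{ij}||\ge |P_{ii}-P_{jj}|$. Note that $M_{ij}+1=P_{ii}+P_{jj}-2P_{ij}=(e_i-e_j)^TP(e_i-e_j)=\|P(e_i-e_j)\|^2$, which lies in $[0,\|e_i-e_j\|^2]=[0,2]$ because $P$ is an orthogonal projection. Hence $|M_{ij}|\le 1$, so $1-|M_{ij}|\ge 0$ and no absolute value is needed on the left. We therefore must show
\[
1-|x-1|\ge |a-b|,\qquad x=\|P(e_i-e_j)\|^2,\quad a=P_{ii}=\|Pe_i\|^2,\quad b=P_{jj}=\|Pe_j\|^2.
\]
This splits into the two inequalities $x\ge |a-b|$ and $2-x\ge |a-b|$. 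For the first, write $u=Pe_i$, $w=Pe_j$. Then $x=\|u-w\|^2$, and $\bigl|\|u\|^2-\|w\|^2\bigr|=|\langle u-w,u+w\rangle|\le \|u-w\|\,\|u+w\|$. The second is the same argument applied to $I-P$, which is again an orthogonal projection: $2-x=\|(I-P)(e_i-e_j)\|^2$, and $\|(I-P)e_i\|^2-\|(I-P)e_j\|^2=b-a$.

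The main obstacle is that the Cauchy--Schwarz step above gives $|a-b|\le\sqrt{x}\,\|u+w\|$, not $|a-b|\le x$ directly. To close the gap I would combine the two pieces. From $\|u-w\|^2+\|u+w\|^2=2(a+b)\le 4$ and the analogous identity for $I-P$, I would multiply the two Cauchy--Schwarz bounds, or optimize jointly over $a,b,P_{ij}$ subject to the $2\times2$ principal minors of $P$ and of $I-P$ being positive semidefinite. Concretely, the constraints $P_{ij}^2\le ab$ and $P_{ij}^2\le (1-a)(1-b)$ define a planar region in which the inequality $1-|a+b-2P_{ij}-1|\ge|a-b|$ is a direct check. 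With $c=P_{ij}$, the inequality is equivalent to
\[
\min\bigl(a+b-2c,\ 2-a-b+2c\bigr)\ge|a-b|,
\]
that is, to the two conditions $c\le \min(a,b)$ and $c\ge \max(a,b)-1$. The first follows from $c\le\sqrt{ab}\le$? — careful: $\sqrt{ab}\ge\min(a,b)$, so this direction fails. The correct source is instead that $P(e_i-e_j)$ and $P e_i$ live in the span, so $\|Pe_i\|^2-\langle Pe_i,Pe_j\rangle=\langle Pe_i, e_i-e_j\rangle$-type estimates via the $2\times2$ Gram matrix of $\{Pe_i,Pe_j\}$ and of $\{(I-P)e_i,(I-P)e_j\}$ must be used jointly. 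This is the step I expect to need the most care, and I would handle it by parametrizing the $2\times 2$ compression of $P$ to $\mathrm{span}\{e_i,e_j\}$ as a contraction $A$ with $0\le A\le I$ and verifying the inequality for such $A$.

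Once the termwise inequality holds, summing gives the stated bound. Equality holds iff every term is tight, i.e.\ $1-|M_{ij}|=|P_{ii}-P_{jj}|$ for all $i\neq j$, which is exactly condition~(\ref{Pijij}).
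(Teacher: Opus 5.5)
\textbf{The termwise inequality is false, so the proof has a gap.} Your opening reduction is sound. By Lemma~\ref{+v} and Lemma~\ref{v}(i) it suffices to prove $\sum_{i\neq j}(1-|M_{ij}|)^2\ge\sum_{i\neq j}(P_{ii}-P_{jj})^2$, and $|M_{ij}|\le 1$ holds because $M_{ij}+1=\|P(e_i-e_j)\|^2\in[0,2]$.

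The problem is $1-|M_{ij}|\ge|P_{ii}-P_{jj}|$, equivalently $\max(P_{ii},P_{jj})-1\le P_{ij}\le\min(P_{ii},P_{jj})$. You already noticed that the upper bound does not follow from $P_{ij}^2\le P_{ii}P_{jj}$. In fact it fails outright, even with $P\mathbf 1=\mathbf 0$, so no refinement of the $2\times2$ analysis can rescue it.

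Take $n=3$, $k=1$, $P=uu^T$ with $u=(2,1,-3)^T/\sqrt{14}$. Then $P\mathbf 1=\mathbf 0$, $P_{11}=4/14$, $P_{22}=1/14$, $P_{12}=2/14$, and
\[
1-|M_{12}|=P_{11}+P_{22}-2P_{12}=\frac{1}{14}<\frac{3}{14}=|P_{11}-P_{22}|.
\]
The pair $\{1,3\}$ fails too ($3/14<5/14$). Only the sum survives: $\sum_{i<j}(1-|M_{ij}|)^2=154/196\ge 98/196=\sum_{i<j}(P_{ii}-P_{jj})^2$.

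This also kills your fallback. The compression $A=\frac{1}{14}\begin{pmatrix}4&2\\2&1\end{pmatrix}$ satisfies $0\le A\le I$ and violates the inequality, so ``verifying it for such $A$'' cannot work. Your equality argument (``every term is tight'') then has nothing to stand on. The terms are not individually ordered, and their simultaneous tightness at equality has to be derived, not read off from termwise domination.

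\textbf{What is missing is a global argument built on Lemma~\ref{5.5}.} This is how the paper proceeds:
\begin{itemize}
\item Sort $P_{11}\ge\cdots\ge P_{nn}$ and apply Cauchy--Schwarz with weights $|v_i-v_j|$, using $\sum_{i\neq j}|v_i-v_j|^2=2n\|v\|^2$.
\item Write each $|v_i-v_j|$ as a sum of gaps $v_r-v_{r+1}$ over the cuts $[r]\mid\{r+1,\dots,n\}$.
\item Bound each cut by $\sum_{i\le r<j}(1-|M_{ij}|)\ge\sum_{i\le r}v_i$, which is Lemma~\ref{5.5}. This step is where the split graphs $H_r$, the rank of $P$ and the condition $P\mathbf 1=\mathbf 0$ actually enter.
\item Summation by parts gives $\sum_r(v_r-v_{r+1})\sum_{i\le r}v_i=\|v\|^2$, which yields the bound $\frac{2}{n}\|v\|^2$.
\end{itemize}

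The equality characterization must also come from this chain. Equality in Cauchy--Schwarz gives $1-|M_{ij}|=c|v_i-v_j|$ for some constant $c$. Tightness of the cut bound at a cut with nonzero partial sum then forces $c=1/n$; the case $v=\mathbf 0$ is immediate. This is exactly condition~(\ref{Pijij}).
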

\begin{proof}
Set $M$ and $v$ as (\ref{Mij}). By Lemma \ref{+v}, 
$$\sum_{i\neq j}(P_{ii}+P_{jj}-2P_{ij}-1)_+-k(k+1)=\frac{1}{4}\bigl[\frac{2}{n}\|v\|^2-\sum_{i\neq j}(1-|M_{ij}|)^2\bigr].$$
And after arranging $v_i$ in non-decreasing order (i.e., arranging $P_{ii}$ in non-decreasing order), we have
\begin{align*}
&\quad \sum_{i\neq j}\bigl(1-|M_{ij}|\bigr)^2
\\&\ge\frac{\displaystyle\bigl(\sum_{i\neq j}(1-|M_{ij}|)|v_i-v_j|\bigr)^2}
        {\displaystyle\sum_{i\neq j}|v_i-v_j|^2}
\qquad\text{(Cauchy--Schwarz)} \\[4pt]
&=\frac{2}{n\|v\|^2}
   \Bigl(\sum_{r=1}^{n-1}(v_r-v_{r+1})
         \Bigl[r(n-r)+\operatorname{tr}\bigl[L(H_r)-rI_n+\frac{r}{n}J_n\bigr]
               +\sum_{i=1}^r v_i\Bigr]\Bigr)^2\qquad(\text{Lemma~\ref{5.5}} ) \\[4pt]
&\ge\frac{2}{n\|v\|^2}
   \Bigl(\sum_{r=1}^{n-1}(v_r-v_{r+1})\sum_{i=1}^r v_i\Bigr)^2
\qquad(\text{Lemma~\ref{5.5} and } \sum_{i=1}^r v_i\ge0) \\[4pt]
&=\frac{2}{n}\|v\|^2.
\end{align*}
Thus $\sum_{i\neq j}(P_{ii}+P_{jj}-2P_{ij}-1)_+\leq k(k+1)$, with equality if and only if the above two inequalities become equalities. In other words, after arranging $v_i$ in non-decreasing order, we have
\begin{enumerate}
\item[(i)] there exists a constant $c$ such that for all $i\neq j$,
  \begin{equation}\label{cs}1-|M_{ij}| = c|v_i-v_j|;\end{equation}
\item[(ii)] for every $r$ with $v_r>v_{r+1}$,\begin{equation}\label{layer}\displaystyle\sum_{i=1}^r\sum_{j=r+1}^n(1-|M_{ij}|) = \sum_{i=1}^r v_i.\end{equation}
\end{enumerate}
Substitute (\ref{cs}) into (\ref{layer}), we have \[
c\sum_{i=1}^r\sum_{j=r+1}^n(v_i-v_j) = \sum_{i=1}^r v_i .
\]And we have
$$\begin{aligned}c\sum_{i=1}^r\sum_{j=r+1}^n(v_i-v_j) &=c\bigl[(n-r)\sum_{i=1}^rv_i-r\sum_{j=r+1}^nv_j\bigr]
\\&=c\bigl[(n-r)\sum_{i=1}^rv_i+r\sum_{i=1}^rv_i\bigr] \qquad\text{(since $\sum_{i=1}^nv_i=0$)}
\\&=cn\sum_{i=1}^rv_i.
\end{aligned}$$
So for every $r$ with $v_r>v_{r+1}$ , we have
$$cn\sum_{i=1}^rv_i = \sum_{i=1}^r v_i .$$
Since $v_i$ is in non-decreasing order and $\sum_{i=1}^nv_i=0$, if $\sum_{i=1}^rv_i=0$ for all $r$, then $v=\mathbf 0$. Then from (\ref{cs}) we have $1-|M_{ij}|=0=|P_{ii}-P_{jj}|.$ If $\sum_{i=1}^rv_i\neq 0$ for some $r$, then  $\sum_{i=1}^rv_i\neq 0$ for all $r<n$, and there exists $r_0<n$ such that $v_{r_0}>0\geq v_{r_0+1}$. Therefore, $$cn\sum_{i=1}^{r_0}v_i = \sum_{i=1}^{r_0} v_i \;\Rightarrow c=\frac{1}{n},$$which means \[
1-|P_{ii}+P_{jj}-2P_{ij}-1|=1-|M_{ij}| = |P_{ii}-P_{jj}|,\qquad \forall\,i\neq j.
\]
Conversely, if (\ref{Pijij}) holds, then after arranging $v_i$ in non-decreasing order, (\ref{cs}) and (\ref{layer}) hold, which implies  $$\sum_{i\neq j}(P_{ii}+P_{jj}-2P_{ij}-1)_+= k(k+1).$$
\end{proof}
To see when the Brouwer's inequality becomes equality, we revisit the proof of Conjecture \ref{blc} in \cite{Kothari2026}.
\begin{theorem}[\cite{Kothari2026}]\label{reproduce}
Let $G$ be a graph of order $n$, $V(G)=[n]$. For $1\leq k\leq n-1$,
\begin{equation}\label{Brou}s_k(G)-e(G)-\binom{k+1}{2}\leq 0.\end{equation}
Furthermore, if $s_k(G)-e(G)-\binom{k+1}{2}=0$ for a given $k$, then there exists an $n\times n$ orthogonal projection matrix $P=(P_{ij})$ of rank $k$ with $P\mathbf 1=\mathbf 0$, such that
\[\sum_{ij\in E(G)}(P_{ii}+P_{jj}-2P_{ij}-1)=\binom{k+1}{2}.\]
\end{theorem}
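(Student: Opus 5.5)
Ky Fan's maximum principle is the starting point. Since $L(G)\mathbf 1=\mathbf 0$ and $k\le n-1$, the $k$ largest eigenvalues of $L(G)$ can be realized on eigenvectors orthogonal to $\mathbf 1$. Hence
\[
s_k(G)=\max\{\operatorname{tr}(L(G)P):\ P \text{ an orthogonal projection of rank } k,\ P\mathbf 1=\mathbf 0\}.
\]
The maximum is attained at the projection $P$ onto the span of the top $k$ eigenvectors, chosen orthogonal to $\mathbf 1$. Write $L(G)=\sum_{ij\in E(G)}(e_i-e_j)(e_i-e_j)^T$. Then
\[
\operatorname{tr}(L(G)P)=\sum_{ij\in E(G)}(P_{ii}+P_{jj}-2P_{ij}).
\]
Therefore
\[
s_k(G)-e(G)=\sum_{ij\in E(G)}(P_{ii}+P_{jj}-2P_{ij}-1)=\sum_{ij\in E(G)}M_{ij},
\]
with $M$ as in (\ref{Mij}) for this particular $P$.

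The bound then comes from discarding negative terms and adding missing positive ones:
\[
\sum_{ij\in E(G)}M_{ij}\le \sum_{ij\in E(G)}(M_{ij})_+\le \sum_{i<j}(M_{ij})_+=\tfrac12\sum_{i\neq j}(M_{ij})_+ .
\]
By Lemma~\ref{Pijrela}, the right-hand side is at most $\tfrac12 k(k+1)=\binom{k+1}{2}$. This proves (\ref{Brou}).

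For the equality statement, suppose $s_k(G)-e(G)-\binom{k+1}{2}=0$ and take $P$ to be the Ky Fan maximizer above. Then $\sum_{ij\in E(G)}(P_{ii}+P_{jj}-2P_{ij}-1)=s_k(G)-e(G)=\binom{k+1}{2}$, which is exactly what the theorem asks for. As byproducts, every inequality in the chain is tight:
\begin{itemize}
\item[(a)] $M_{ij}\ge 0$ for every edge $ij$;
\item[(b)] $M_{ij}\le 0$ for every non-edge $ij$;
\item[(c)] $P$ satisfies the equality condition (\ref{Pijij}) of Lemma~\ref{Pijrela}.
\end{itemize}
These facts will be used later to force split structure.

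The only delicate step is justifying the variational formula with the constraint $P\mathbf 1=\mathbf 0$. This holds because $\mathbf 1$ spans an eigenvector for the eigenvalue $\lambda_n=0$, and $k\le n-1$. One can therefore choose an orthonormal eigenbasis containing $\mathbf 1/\sqrt n$ and take the top $k$ eigenvectors from its complement. This goes through even if $\lambda_k=0$, since $0$ then has multiplicity greater than one. Everything else is an application of Lemma~\ref{Pijrela}, which was already established.
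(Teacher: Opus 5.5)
Your proof is correct and follows essentially the same route as the paper. You realize $s_k(G)=\operatorname{tr}(L(G)P)$ for a rank-$k$ projection with $P\mathbf 1=\mathbf 0$ (the paper cites Courant--Fischer; you cite Ky Fan and justify the constraint explicitly), expand the trace edgewise, pass to positive parts over all pairs, and apply Lemma~\ref{Pijrela}; your tightness consequences (a)--(c) are exactly what the paper extracts next in Lemma~\ref{Pii}.
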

\begin{proof} By the Courant--Fischer min-max principle, there exists an $n\times n$ orthogonal projection matrix $P=(P_{ij})$ of rank $k$ with $P\mathbf 1=\mathbf 0$, such that $s_k(G)=\operatorname{tr}\bigl(P^TL(G)P\bigr)=\operatorname{tr}(PL(G)).$

By Lemma \ref{Pijrela},
    \begin{align*}
        &\quad s_k(G)-e(G)-\binom{k+1}{2}
        \\&=\operatorname{tr}(PL(G))-e(G)-\binom{k+1}{2}
        \\&=\sum_{ij\in E(G)}(P_{ii}+P_{jj}-2P_{ij}-1)-\binom{k+1}{2}
        \\&\leq \frac{1}{2}\sum_{i\neq j}(P_{ii}+P_{jj}-2P_{ij}-1)_+-\binom{k+1}{2}
        \\&\leq 0.
    \end{align*}
And if $s_k(G)-e(G)-\binom{k+1}{2}=0$ for a given \(k\), then \[\sum_{ij\in E(G)}(P_{ii}+P_{jj}-2P_{ij}-1)=\binom{k+1}{2}.\]
\end{proof}

\begin{lemma}\label{Pii}
Let $P=(P_{ij})$ be an $n\times n$ orthogonal projection matrix of rank $k$, where \(1 \leq k\leq n-1\) and $P\mathbf 1=\mathbf 0$. Let $G$ be a graph of order $n$, $V(G)=[n]$. If$$\sum_{ij\in E(G)}(P_{ii}+P_{jj}-2P_{ij}-1)= \binom{k+1}{2},$$then
\begin{enumerate}
    \item[(i)] for $i\in [n]$, $0\leq P_{ii}<1$;
    \item[(ii)] $ij\in E(G)\;\Leftrightarrow P_{ij}=\max(P_{ii},P_{jj})-1\;\Leftrightarrow
P_{ij}<0$;
    \item[(iii)] $ij\notin E(G)\;\Leftrightarrow P_{ij}=\min(P_{ii},P_{jj})\;\Leftrightarrow
P_{ij}\geq0$;
    \item[(iv)] for $i\in [n]$, $P_{ii}=0$ implies that $i$ is an isolated vertex.
\end{enumerate}
\end{lemma}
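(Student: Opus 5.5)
The plan is to extract the pointwise information hidden in the chain of inequalities used in the proof of Theorem~\ref{reproduce}. Writing $M_{ij}=P_{ii}+P_{jj}-2P_{ij}-1$ as in (\ref{Mij}), that proof gives
\[
\sum_{ij\in E(G)}M_{ij}\;\le\;\sum_{ij\in E(G)}(M_{ij})_+\;\le\;\frac12\sum_{i\neq j}(M_{ij})_+\;\le\;\binom{k+1}{2},
\]
where the last step is Lemma~\ref{Pijrela}. By hypothesis the two ends coincide, so every inequality in the chain is an equality. I will then read off the consequences of each equality separately.

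\textbf{Consequences of the three equalities.}
\begin{itemize}
\item The first equality gives $M_{ij}\ge 0$ for every edge $ij$.
\item The second equality gives $(M_{ij})_+=0$, that is $M_{ij}\le 0$, for every non-edge $ij$ with $i\ne j$.
\item The third equality is the equality case of Lemma~\ref{Pijrela}. So (\ref{Pijij}) holds: $1-|M_{ij}|=|P_{ii}-P_{jj}|$ for all $i\neq j$.
\end{itemize}

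\textbf{Solving for $P_{ij}$.} For an edge, $M_{ij}\ge0$ gives $M_{ij}=1-|P_{ii}-P_{jj}|$. Hence
\[
2P_{ij}=P_{ii}+P_{jj}+|P_{ii}-P_{jj}|-2,
\]
so $P_{ij}=\max(P_{ii},P_{jj})-1$. For a non-edge, $M_{ij}\le 0$ gives $1+M_{ij}=|P_{ii}-P_{jj}|$. Hence
\[
2P_{ij}=P_{ii}+P_{jj}-|P_{ii}-P_{jj}|,
\]
so $P_{ij}=\min(P_{ii},P_{jj})$.

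\textbf{Part (i).} Since $P$ is a symmetric idempotent, $P_{ii}=\|Pe_i\|^2$ and also $P_{ii}=e_i^TPe_i\le\|e_i\|^2=1$, so $0\le P_{ii}\le 1$. Suppose $P_{ii}=1$. Then $\|Pe_i\|=\|e_i\|$, which forces $Pe_i=e_i$, so $e_i$ lies in the range of $P$. But $P\mathbf 1=\mathbf 0$ means the range of $P$ is orthogonal to $\mathbf 1$, while $e_i^T\mathbf 1=1\neq0$. This contradiction gives $0\le P_{ii}<1$.

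\textbf{Parts (ii) and (iii).} For an edge, $P_{ij}=\max(P_{ii},P_{jj})-1<0$ by (i). For a non-edge, $P_{ij}=\min(P_{ii},P_{jj})\ge 0$ by (i). Every pair $i\ne j$ is either an edge or a non-edge, and the two sign conditions $P_{ij}<0$ and $P_{ij}\ge0$ are mutually exclusive. So each implication we have shown upgrades to an equivalence:
\begin{itemize}
\item $ij\in E(G)$ if and only if $P_{ij}<0$;
\item $ij\in E(G)$ if and only if $P_{ij}=\max(P_{ii},P_{jj})-1$, because this value is negative;
\item the analogous equivalences hold for non-edges with $P_{ij}=\min(P_{ii},P_{jj})\ge0$.
\end{itemize}

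\textbf{Part (iv).} If $P_{ii}=0$, then $\|Pe_i\|^2=0$, so $Pe_i=0$ and hence $P_{ij}=0$ for every $j$. By (ii), an edge $ij$ would require $P_{ij}<0$, so $i$ has no neighbours and is isolated.

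\textbf{Expected obstacle.} The only delicate points are bookkeeping. Lemma~\ref{Pijrela} is invoked through its equality characterization, and one must note the possible overlap $M_{ij}=0$. That overlap is harmless: the classification into the two cases is by edge membership, not by the sign of $M_{ij}$, and the strict inequality $P_{ii}<1$ is what separates the two cases cleanly.
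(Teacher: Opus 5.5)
Your proposal is correct and follows the paper's proof. You force equality throughout the chain from Theorem~\ref{reproduce} and read off the sign of $M_{ij}$ on edges and non-edges. You then invoke the equality case (\ref{Pijij}) of Lemma~\ref{Pijrela} to solve for $P_{ij}$, use $0\le P_{ii}<1$ to upgrade the implications to equivalences, and handle (iv) exactly as the paper does via $Pe_i=\mathbf 0$.

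The only deviation is in (i). You get $P_{ii}<1$ directly from $P_{ii}=\|Pe_i\|^2$ and $\operatorname{range}(P)\perp\mathbf 1$, which holds for any such projection regardless of the equality hypothesis. The paper instead combines the zero row sum $\sum_j P_{ij}=0$ with the already-derived formulas $P_{ij}\in\{\max(P_{ii},P_{jj})-1,\ \min(P_{ii},P_{jj})\}$.
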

\begin{proof}
Set $M$ and $v$ as (\ref{Mij}). By Lemma \ref{Pijrela}, we have
\begin{align*}
0=\sum_{ij\in E(G)}(P_{ii}+P_{jj}-2P_{ij}-1)-\binom{k+1}{2}\leq \frac{1}{2}\sum_{i\neq j}(P_{ii}+P_{jj}-2P_{ij}-1)_+-\binom{k+1}{2}\leq 0.
\end{align*}
Thus we have
\[
ij\in E(G)\;\Rightarrow\; M_{ij}\ge 0,\qquad
ij\notin E(G)\;\Rightarrow\; M_{ij}\le 0,
\]
\[\sum_{i\neq j}(P_{ii}+P_{jj}-2P_{ij}-1)_+=k(k+1).\]
Lemma \ref{Pijrela} gives $1-|M_{ij}| = |P_{ii}-P_{jj}|$ for all $i\neq j$.
In the chain below we use the identities
$\displaystyle\max(a,b)=\frac{a+b+|a-b|}{2}$, $\displaystyle\min(a,b)=\frac{a+b-|a-b|}{2}$.

\noindent\textbf{Case 1:} $ij\in E(G)$. Then $M_{ij}\ge 0$, so $|M_{ij}|=M_{ij}$ and
$1-M_{ij}=|P_{ii}-P_{jj}|$. Hence
\[
\begin{aligned}
M_{ij}\ge 0
&\Longleftrightarrow 1-M_{ij} = |P_{ii}-P_{jj}| \\
&\Longleftrightarrow 1-(P_{ii}+P_{jj}-2P_{ij}-1) = |P_{ii}-P_{jj}| \\
&\Longleftrightarrow P_{ij} = \max(P_{ii},P_{jj}) - 1 .
\end{aligned}
\]

\noindent\textbf{Case 2:} $ij\notin E(G)$. Then $M_{ij}\le 0$, so $|M_{ij}|=-M_{ij}$ and
$1+M_{ij}=|P_{ii}-P_{jj}|$. Hence
\[
\begin{aligned}
M_{ij}\le 0
&\Longleftrightarrow 1+M_{ij} = |P_{ii}-P_{jj}| \\
&\Longleftrightarrow 1+(P_{ii}+P_{jj}-2P_{ij}-1) = |P_{ii}-P_{jj}| \\
&\Longleftrightarrow P_{ij} = \min(P_{ii},P_{jj}) .
\end{aligned}
\]
Therefore, we have\[ij\in E(G)\;\Rightarrow
P_{ij}=\max(P_{ii},P_{jj})-1,
\]
\[ij\notin E(G)\;\Rightarrow
P_{ij}=\min(P_{ii},P_{jj}).
\]
Since $P$ is an orthogonal projection matrix, $0\leq P_{ii}\leq 1$. Suppose to the contrary that $P_{ii}=1$. From $P\mathbf{1}=\mathbf 0$ we have $\sum_j P_{ij}=0$, hence $1+\sum_{j\neq i} P_{ij}=0$. But for $j\neq i$, either $P_{ij}=\max(1,P_{jj})-1=0$ or $P_{ij}=\min(1,P_{jj})\geq 0$, so $1+\sum_{j\neq i} P_{ij}\geq 1$, which is a contradiction. Hence for $i\in [n]$, $0\leq P_{ii}<1$. 

Therefore, we have\[ij\in E(G)\;\Rightarrow
P_{ij}=\max(P_{ii},P_{jj})-1\;\Rightarrow P_{ij}<0,
\]
\[ij\notin E(G)\;\Rightarrow
P_{ij}=\min(P_{ii},P_{jj})\;\Rightarrow P_{ij}\geq0.
\]
Take the contrapositive of the two statements above and combine them we have
\[P_{ij}\geq0\;\Rightarrow P_{ij}\neq \max(P_{ii},P_{jj})-1\;\Rightarrow
ij\notin E(G)\;\Rightarrow
P_{ij}=\min(P_{ii},P_{jj})\;\Rightarrow P_{ij}\geq0,
\]
\[P_{ij}<0\;\Rightarrow P_{ij}\neq\min(P_{ii},P_{jj})\;\Rightarrow
ij\in E(G)\;\Rightarrow
P_{ij}=\max(P_{ii},P_{jj})-1\;\Rightarrow P_{ij}<0.
\]Thus we have
\[ij\in E(G)\;\Leftrightarrow P_{ij}=\max(P_{ii},P_{jj})-1\;\Leftrightarrow
P_{ij}<0,
\]
\[ij\notin E(G)\;\Leftrightarrow P_{ij}=\min(P_{ii},P_{jj})\;\Leftrightarrow
P_{ij}\geq0.
\]
Moreover, for $i\in [n]$, $P_{ii}=0\;\Rightarrow\;e_i^TPe_i=0\;\Rightarrow\;e_i^TP^TPe_i=0\Rightarrow\;\|Pe_i\|^2=0\;\Rightarrow\;Pe_i=\mathbf 0\;\Rightarrow\;e_j^TPe_i=P_{ij}=0$. Hence every vertex $j$ is not adjacent to $i$, and therefore $i$ is an isolated vertex.
\end{proof}

We may consider only graphs without isolated vertices; then $0<P_{ii}<1$ holds for every $i$.

\begin{definition}
Let $P=(P_{ij})$ be an $n\times n$ orthogonal projection matrix. Let $G$ be a graph of order $n$, $V(G)=[n]$. $(P,G)$ is called a {\it good pair} if for every $i$, $P_{ii}\in(0,1)$; and for every $i\neq j$,
$$ij\in E(G)\;\Longleftrightarrow\; P_{ij}=\max(P_{ii},P_{jj})-1,$$
$$ij\notin E(G) \;\Longleftrightarrow\; P_{ij}=\min(P_{ii},P_{jj}).$$
\end{definition}

\begin{lemma}\label{lem:001}
Let $(P,G)$ be a good pair. If for distinct vertices $i,j,k$,$$P_{ii}\geq P_{jj}\geq P_{kk},\;\ ij\notin E(G),\;\ ik\notin E(G),$$then $jk\notin E(G)$.
\end{lemma}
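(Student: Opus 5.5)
The plan is to argue by contradiction, using only the definition of a good pair together with the positive semidefiniteness of $P$, through a single Cauchy--Schwarz inequality on Gram vectors. Suppose $jk\in E(G)$. Write $a=P_{ii}$, $b=P_{jj}$, $c=P_{kk}$, so that $1>a\ge b\ge c>0$. The good-pair conditions translate the graph data into matrix entries:
\begin{itemize}
\item $ij\notin E(G)$ gives $P_{ij}=\min(a,b)=b$;
\item $ik\notin E(G)$ gives $P_{ik}=\min(a,c)=c$;
\item the assumed edge $jk\in E(G)$ gives $P_{jk}=\max(b,c)-1=b-1$.
\end{itemize}

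Since $P$ is an orthogonal projection, $P_{st}=\langle Pe_s,Pe_t\rangle$. So I would set $x_s:=Pe_s$ and consider the vector $y:=x_i-x_j$. Its squared norm is
\[
\|y\|^2=P_{ii}-2P_{ij}+P_{jj}=a-2b+b=a-b\ge 0,
\]
and its inner product with $x_k$ is
\[
\langle y,x_k\rangle=P_{ik}-P_{jk}=c-(b-1)=1-b+c.
\]
Cauchy--Schwarz then gives
\[
(1-b+c)^2\le (a-b)\,c .
\]

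The final step, which is also the only place where any care is needed, is to contradict this inequality using $a<1$. That bound is exactly Lemma~\ref{Pii}(i), and it is built into the definition of a good pair. Put $u:=1-b>0$. Since $c>0$ and $a<1$,
\[
(a-b)c<(1-b)c=uc<u^2+2uc+c^2=(u+c)^2=(1-b+c)^2 .
\]
This contradicts the Cauchy--Schwarz bound, so $jk\notin E(G)$.

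I do not expect a genuine obstacle here. The key choice is to test $x_i-x_j$ against $x_k$, rather than to examine the full $3\times 3$ principal minor of $P$. Checking strictness only uses $P_{kk}>0$ and $P_{ii}<1$, both of which are part of the good-pair hypotheses.
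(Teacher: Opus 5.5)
Your proof is correct, and it takes a different and shorter route than the paper. The paper writes out the full $3\times 3$ principal submatrix of $P$ on $\{i,j,k\}$ and expands its determinant. It then substitutes $x=P_{ii}-P_{jj}$, $y=P_{jj}-P_{kk}$, $z=1-P_{jj}$ and shows $\det A<0$ through a case split on the sign of $(1-y)(1-z)-z$.

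You instead use the fact that $P_{ij}=P_{jj}$ makes $Pe_i-Pe_j$ orthogonal to $Pe_j$, and you test that vector against $Pe_k$. In effect, you replace the Gram matrix of $(Pe_i,Pe_j,Pe_k)$ by the congruent Gram matrix of $(Pe_i-Pe_j,\,Pe_j,\,Pe_k)$. You then use only one of its $2\times 2$ principal minors, namely $(P_{ii}-P_{jj})P_{kk}-(1-P_{jj}+P_{kk})^2\ge 0$.

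This buys three things:
\begin{itemize}
\item It removes the case analysis entirely.
\item It gives the contradiction in one line.
\item It shows exactly which hypotheses matter: the ordering $P_{ii}\ge P_{jj}\ge P_{kk}$ only fixes the three off-diagonal entries, and strictness needs only $P_{kk}>0$ and $P_{ii}<1$.
\end{itemize}

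The paper's determinant approach is more mechanical, since it needs no choice of test vector. The price is a longer computation whose final inequality chain is harder to check by eye.
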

\begin{proof}
Suppose the contrary that $jk\in E(G)$. Consider the principal submatrix of $P$ indexed by $i,j,k$. Since\[
1>P_{ii}\geq P_{jj}\geq P_{kk}>0.
\]
By the definition of a good pair,
\[
P_{ij}=P_{jj},\qquad P_{ik}=P_{kk},\qquad P_{jk}=P_{jj}-1.
\]
The corresponding $3\times 3$ principal submatrix is
\[
A=
\begin{pmatrix}
P_{ii} & P_{jj} & P_{kk}\\
P_{jj} & P_{jj} & P_{jj}-1\\
P_{kk} & P_{jj}-1 & P_{kk}
\end{pmatrix}.
\]
Since $P$ is an orthogonal projection, it is positive semidefinite, so every principal minor is nonnegative; in particular $\det A\geq 0$.
We compute $\det A$:
\[
\begin{aligned}
\det A
&= P_{ii}\bigl(P_{jj}P_{kk}-(P_{jj}-1)^2\bigr) - P_{jj}P_{kk} + P_{jj}P_{kk}(P_{jj}-1-P_{kk}).
\end{aligned}
\]
Introduce
\[
x=P_{ii}-P_{jj},\qquad y=P_{jj}-P_{kk},\qquad z=1-P_{jj},
\]
so that
\[
0\leq x<z,\qquad 0\leq y<1-z,\qquad z>0.
\]
Substituting $P_{ii}=P_{jj}+x=1-z+x$ and $P_{kk}=P_{jj}-y=1-z-y$ gives
\[
\det A = x(1-y-2z+yz) - (1-y)^2(1-z).
\]
Notice that $1-y-2z+yz = (1-y)(1-z)-z$.

If $1-y-2z+yz\leq 0$, then $x\geq0$ implies
\[
\det A \leq -(1-y)^2(1-z) < 0,
\]
a contradiction.

If $1-y-2z+yz > 0$, then using $x<z$ and $y<1-z$ we obtain
\[
\begin{aligned}
\det A &< z(1-y-2z+yz) - (1-y)^2(1-z)
\\&=(1-z)(1-y)(z+y-1)-z^2
\\&<0,
\end{aligned}
\]
again a contradiction.

In every case $\det A<0$, which contradicts the positive semidefiniteness of $P$. Hence $j$ and $k$ cannot be adjacent.
\end{proof}

\begin{lemma}
If $(P,G)$ is a good pair, then $(I-P,\overline{G})$ is also a good pair.
\end{lemma}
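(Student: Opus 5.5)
Set $Q:=I-P$. We check the three parts of the definition of a good pair for $(Q,\overline G)$. The whole argument is the observation that passing to $I-P$ sends every entry through $x\mapsto 1-x$ on the diagonal and $x\mapsto -x$ off it, and that these maps swap the roles of $\max$ and $\min$ exactly as passing to $\overline G$ swaps edges and non-edges.

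First, $Q$ is an orthogonal projection matrix, since $Q^T=Q$ and $Q^2=I-2P+P^2=I-P=Q$. Its diagonal entries are $Q_{ii}=1-P_{ii}$, and these lie in $(0,1)$ because $P_{ii}\in(0,1)$. Off the diagonal, $Q_{ij}=-P_{ij}$ for $i\neq j$.

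Now fix $i\neq j$ and translate the two defining conditions. Because $x\mapsto 1-x$ is decreasing,
\[
\max(Q_{ii},Q_{jj})=1-\min(P_{ii},P_{jj}),\qquad \min(Q_{ii},Q_{jj})=1-\max(P_{ii},P_{jj}).
\]
For the edge condition on $\overline G$,
\[
ij\in E(\overline G)\iff ij\notin E(G)\iff P_{ij}=\min(P_{ii},P_{jj})\iff Q_{ij}=-\min(P_{ii},P_{jj})=\max(Q_{ii},Q_{jj})-1 .
\]
For the non-edge condition,
\[
ij\notin E(\overline G)\iff ij\in E(G)\iff P_{ij}=\max(P_{ii},P_{jj})-1\iff Q_{ij}=1-\max(P_{ii},P_{jj})=\min(Q_{ii},Q_{jj}).
\]
Each step is a reversible substitution, so both biconditionals required of a good pair hold for $(Q,\overline G)$.

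The only point needing attention is that the complement is taken on the same vertex set $[n]$, so that the pairs $i\neq j$ are exactly partitioned into edges of $\overline G$ and edges of $G$. There is no real obstacle here; the proof is bookkeeping. The conditions $P\mathbf 1=\mathbf 0$ and rank $k$ are not part of the definition of a good pair and are not needed. If they are wanted later, they also carry over: $Q\mathbf 1=\mathbf 1$, and $Q$ has rank $n-k$.
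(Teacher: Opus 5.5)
Your proof is correct and follows the paper's argument. Both set $Q=I-P$, note that $Q_{ii}=1-P_{ii}\in(0,1)$ and $Q_{ij}=-P_{ij}$, and use the fact that $x\mapsto 1-x$ swaps $\max$ and $\min$ to exchange the edge and non-edge conditions between $G$ and $\overline G$. Your chains of biconditionals are slightly more careful than the paper's, which writes only the forward implications and leaves the converses implicit.
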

\begin{proof}
Let $Q=(Q_{ij})=I-P$; then $Q_{ii}=1-P_{ii}\in(0,1)$ and $Q_{ij}=-P_{ij}$ for $i\neq j$. Clearly $Q$ is also an orthogonal projection. If $ij\in E(G)$, i.e., $ij\notin E(\overline{G})$, then
\[
Q_{ij} = -P_{ij} = -\max(P_{ii},P_{jj})+1 = \min(1-P_{ii},1-P_{jj}) = \min(Q_{ii},Q_{jj}).
\]
If $ij\notin E(G)$, i.e., $ij\in E(\overline{G})$, then
\[
Q_{ij} = -P_{ij} = -\min(P_{ii},P_{jj}) = \max(1-P_{ii},1-P_{jj})-1 = \max(Q_{ii},Q_{jj})-1.
\]
Thus $(I-P,\overline{G})$ is a good pair.
\end{proof}

\begin{lemma}\label{lem:011}
Let $(P,G)$ be a good pair. If for distinct vertices $i,j,k$,
\[
P_{ii}\geq P_{jj}\geq P_{kk},\;\ ij\notin E(G),\;\ ik\in E(G),
\]
then $jk\notin E(G)$.
\end{lemma}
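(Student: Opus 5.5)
The plan is to deduce this from Lemma~\ref{lem:001} by passing to the complementary good pair, rather than redoing a determinant computation. By the preceding lemma, if $(P,G)$ is a good pair then so is $(Q,\overline G)$ with $Q=I-P$. Since $Q_{ii}=1-P_{ii}$, the ordering $P_{ii}\ge P_{jj}\ge P_{kk}$ reverses to
\[
Q_{kk}\ge Q_{jj}\ge Q_{ii}.
\]
Lemma~\ref{lem:001} can therefore be applied to the pair $(Q,\overline G)$ with the three vertices taken in the order $k,j,i$. In that order $k$ has the largest diagonal entry, $j$ the middle one and $i$ the smallest.

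For this triple, Lemma~\ref{lem:001} says: if $kj\notin E(\overline G)$ and $ki\notin E(\overline G)$, then $ji\notin E(\overline G)$. I will use it in contrapositive form.

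\begin{itemize}
\item The hypothesis $ik\in E(G)$ gives $ki\notin E(\overline G)$.
\item The hypothesis $ij\notin E(G)$ gives $ji\in E(\overline G)$.
\end{itemize}

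Now suppose $kj\notin E(\overline G)$. Then both premises of Lemma~\ref{lem:001} hold, so it would force $ji\notin E(\overline G)$. This contradicts the second item above. Hence $kj\in E(\overline G)$, which is exactly $jk\notin E(G)$.

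The only points needing care are that the complement lemma keeps all diagonal entries strictly inside $(0,1)$, and that ties in the ordering are allowed. Both are covered: the complement lemma asserts that $(I-P,\overline G)$ is a good pair, and Lemma~\ref{lem:001} only assumes non-strict inequalities, so no separate argument is needed for ties.

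I do not expect a real obstacle. If the reduction somehow failed, the fallback would be to mimic the proof of Lemma~\ref{lem:001} directly. By the good-pair relations, $P_{ij}=P_{jj}$, $P_{ik}=P_{ii}-1$ and (assuming $jk\in E(G)$) $P_{jk}=P_{jj}-1$. One would then show that the $3\times3$ principal minor on $\{i,j,k\}$ is negative, contradicting positive semidefiniteness of $P$. The complement reduction avoids this computation.
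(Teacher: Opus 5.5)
Your proposal is correct and is essentially the paper's own proof: you pass to the good pair $(I-P,\overline G)$, reverse the diagonal ordering, and apply Lemma~\ref{lem:001} to the triple in the order $k,j,i$, which contradicts $ij\in E(\overline G)$. Your contrapositive phrasing, as opposed to the paper's proof by contradiction, is only a cosmetic difference.
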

\begin{proof}
Assume the contrary: $jk\in E(G)$. Set $Q=(Q_{ij}) = I-P$.
By the previous lemma, $(Q,\overline{G})$ is a good pair.

From $P_{ii}\geq P_{jj}\geq P_{kk}$ we obtain
\[
Q_{kk} \geq Q_{jj} \geq Q_{ii}.
\]
In $\overline{G}$ the edges are:
\[
jk\notin E(\overline{G}),\quad ik\notin E(\overline{G}),\quad ij\in E(\overline{G}).
\]
Reordering as $k,j,i$, this becomes
\[
kj\notin E(\overline{G}),\quad ki\notin E(\overline{G}),\quad ji\in E(\overline{G}),
\]
which contradicts Lemma~\ref{lem:001} applied to the good pair $(Q,\overline{G})$.
\end{proof}

Combining Lemma~\ref{lem:001} and Lemma~\ref{lem:011} yields the following.

\begin{corollary}\label{lem:0*1}
Let $(P,G)$ be a good pair. If for distinct vertices $i,j,k$,
\[
P_{ii}\geq P_{jj}\geq P_{kk},\;\ ij\notin E(G),
\]
then $jk\notin E(G)$.
\end{corollary}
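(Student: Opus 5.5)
The corollary follows from a case split on whether $ik$ is an edge of $G$. Fix distinct vertices $i,j,k$ of the good pair $(P,G)$ with $P_{ii}\ge P_{jj}\ge P_{kk}$ and $ij\notin E(G)$. Exactly one of $ik\notin E(G)$ or $ik\in E(G)$ holds. No new computation is needed; the corollary is simply the union of the two preceding lemmas.

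\emph{Case $ik\notin E(G)$.} The hypotheses are then exactly those of Lemma~\ref{lem:001}: $P_{ii}\ge P_{jj}\ge P_{kk}$, $ij\notin E(G)$, and $ik\notin E(G)$. That lemma gives $jk\notin E(G)$.

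\emph{Case $ik\in E(G)$.} The hypotheses are now exactly those of Lemma~\ref{lem:011}, with the same ordering of the diagonal entries, $ij\notin E(G)$, and $ik\in E(G)$. Again $jk\notin E(G)$.

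There is no real obstacle. The only point to check is that the vertex labelling and the ordering $P_{ii}\ge P_{jj}\ge P_{kk}$ match those of the two lemmas verbatim. They do: in both lemmas $i$ carries the largest diagonal entry and is the vertex non-adjacent to $j$.
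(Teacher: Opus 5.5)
Your proposal is correct and is exactly the paper's argument. The paper obtains the corollary by combining Lemma~\ref{lem:001} (case $ik\notin E(G)$) and Lemma~\ref{lem:011} (case $ik\in E(G)$), whose hypotheses match the statement verbatim.
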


\begin{theorem}\label{split}
If $(P,G)$ is a good pair, then $G$ is a split graph.
\end{theorem}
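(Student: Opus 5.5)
Order the vertices so that $P_{11}\ge P_{22}\ge\cdots\ge P_{nn}$, breaking ties arbitrarily but fixing this order once and for all. We will show that the vertices of $G$ split into an initial segment $\{1,\dots,r\}$ that is independent and a terminal segment $\{r+1,\dots,n\}$ that is a clique. This gives a split partition. The only tool needed is Corollary~\ref{lem:0*1}, applied to triples listed in this order; it is applicable whenever $i,j,k$ are distinct with $P_{ii}\ge P_{jj}\ge P_{kk}$.

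The key step is a ``forward propagation'' statement: if $j<k$ and $jk\notin E(G)$, then $k$ is non-adjacent to every $\ell$ with $k<\ell$. Take the triple $(j,k,\ell)$. We have $P_{jj}\ge P_{kk}\ge P_{\ell\ell}$ and $jk\notin E(G)$, so Corollary~\ref{lem:0*1} gives $k\ell\notin E(G)$.

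Now let $r$ be the largest index such that vertex $r$ has a non-neighbour among $\{r+1,\dots,n\}$. If no such index exists, then every pair $j<k$ is an edge, so $G$ is complete and trivially split; take $r=0$.

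\textbf{Clique part.} Suppose there were a non-edge $ab$ with $r<a<b$. Then $a>r$ would have the later non-neighbour $b$, contradicting the maximality of $r$. So $\{r+1,\dots,n\}$ is a clique.

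\textbf{Independent part.} Let $s$ be the later non-neighbour of $r$, so $s>r$ and $rs\notin E(G)$. We must show $\{1,\dots,r\}$ is independent. The natural idea is that vertex $r$ has a non-edge going forward, and we want every earlier vertex to inherit non-adjacency. Applying propagation with $j=r$ and $k=s$ only yields information about vertices after $s$, so it does not help directly. Instead, apply Corollary~\ref{lem:0*1} to the triple $(i,r,s)$ and to other triples involving earlier vertices.

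This is the main obstacle: the corollary only propagates non-edges forward, from a non-edge $ij$ to $jk$ with $k$ later. Showing that the earlier block is independent requires propagating backwards. For that we use the complementary statement. Since $(I-P,\overline G)$ is a good pair with the order reversed, the corollary applied there says the following: if $P_{ii}\ge P_{jj}\ge P_{kk}$ and $jk\in E(G)$, then $ij\in E(G)$. In other words, edges propagate backward.

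With both propagation rules available, the cleanest route is the following. Define $I=\{v : v \text{ has a non-neighbour } w \text{ with } w>v\}$. By the forward rule, if $v\in I$ has the later non-neighbour $w$, then $w$ is non-adjacent to all vertices after it.

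\textbf{$I$ is independent.} Take $u<v$ in $I$, where $v$ has the later non-neighbour $w>v$. Suppose $uv\in E(G)$. Apply the backward edge rule to the triple $(u,v,w)$ in this order. We would need $vw\in E(G)$ to conclude anything, and we have the opposite. So we argue instead by the forward rule: consider the non-edge of $u$ with some $x>u$.
\begin{itemize}
\item If $x<v$, the forward rule applied to $(u,x,v)$ gives $xv\notin E(G)$.
\item If $x=v$, then $uv\notin E(G)$ directly.
\item If $x>v$, apply the corollary to $(u,v,x)$? That requires $uv\notin E(G)$, which is what we are trying to prove.
\end{itemize}
So the case $x>v$ is the delicate one. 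I would handle it with the $3\times 3$ principal minor on $\{u,v,x\}$ together with the values in $\{u,v,w\}$, checking via an explicit determinant computation, as in Lemma~\ref{lem:001}, that the pattern $uv\in E(G)$, $ux\notin E(G)$, $vw\notin E(G)$ forces a negative minor.

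\textbf{$V\setminus I$ is a clique.} This follows from the definition of $I$: a vertex outside $I$ is adjacent to all later vertices. Any two vertices of $V\setminus I$ are therefore adjacent, since the earlier of the two is outside $I$.

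This yields the split partition $V(G)=I\sqcup(V\setminus I)$. The expected main difficulty is exactly the independence of $I$, where the forward and backward propagation rules must be combined. If that fails, the fallback is to show directly that $G$ contains no induced $2K_2$, $C_4$ or $C_5$, using the corollary on the relevant triples ordered by $P_{ii}$; by the Földes--Hammer characterization this also proves that $G$ is split.
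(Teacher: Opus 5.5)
Your main line orients the split partition the wrong way round, and the structural claim it rests on is false. You correctly extract the rule that for $i<j<k$ in the $P_{ii}$-order, $jk\in E(G)\Rightarrow ij\in E(G)$. This is just the contrapositive of Corollary~\ref{lem:0*1}, so the detour through $(I-P,\overline G)$ is unnecessary. The rule pushes edges toward the \emph{high}-$P_{ii}$ end, so that end must be the clique, not the independent set.

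Here is a concrete counterexample. Let $G$ be the paw (triangle $abc$ plus the edge $ad$) and take $k=2$. Its Laplacian eigenvalues are $4,3,1,0$, so $s_2(G)=7=e(G)+3$. The projection $P$ onto the span of $(3,-1,-1,-1)^T$ and $(0,1,-1,0)^T$ has $P_{aa}=\tfrac34$, $P_{bb}=P_{cc}=\tfrac7{12}$, $P_{dd}=\tfrac1{12}$, $P_{ab}=P_{ac}=P_{ad}=-\tfrac14$, $P_{bc}=-\tfrac5{12}$ and $P_{bd}=P_{cd}=\tfrac1{12}$. One checks entrywise that $(P,G)$ is a good pair.

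In the order $a,b,c,d$ there is no cut into an independent prefix and a clique suffix. Your set $I=\{v:\exists\, w>v,\ vw\notin E(G)\}$ equals $\{b,c\}$, which is an edge. So no determinant computation can settle your ``delicate'' case. With $u=b$, $v=c$, $w=x=d$, the pattern $uv\in E(G)$, $ux\notin E(G)$, $vw\notin E(G)$ occurs inside a genuine good pair, and the $3\times 3$ principal minor on $\{b,c,d\}$ equals $0$, not a negative number. Your case $x<v$ is not settled either: it yields $xv\notin E(G)$, not $uv\notin E(G)$.

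The repair is to swap the roles, which is exactly the paper's proof. Let $r$ be the largest index that has a later \emph{neighbour} $t$. Then $\{r+1,\dots,n\}$ is independent by maximality. The backward edge rule applied to $(a,r,t)$ gives $ar\in E(G)$ for every $a<r$. Applied to $(a,b,r)$, it then gives $ab\in E(G)$ for all $a<b<r$, so $\{1,\dots,r\}$ is a clique.

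Your F\"oldes--Hammer fallback would also go through with the same tool, since the corollary forbids any $i<j<k$ with $ij\notin E(G)$ and $jk\in E(G)$. In an induced $2K_2$ or $C_5$, the earliest vertex together with the edge formed by two of its non-neighbours is such a triple. In an induced $C_4$ with earliest vertex $a$ and antipode $z$, both neighbours of $z$ must precede $z$, since otherwise $a$, $z$ and that neighbour form a forbidden triple. Those two neighbours together with $z$ then form a forbidden triple themselves. As written, however, only this unworked fallback is viable; the argument you actually develop does not prove the theorem.
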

\begin{proof}
Relabel the vertices so that
\[
P_{11} \geq P_{22} \geq \cdots \geq P_{nn}.
\]
If $G$ has no edges, then $G$ itself is an independent set, hence a split graph.

Assume now that $G$ has at least one edge. Define
\[
r = \max\{\, s : \text{there exists } t>s \text{ with } st\in E(G) \,\}.
\]
In words, $r$ is the largest value among the smaller endpoints of all edges.

By definition, there exists $t>r$ such that $rt\in E(G)$. Set
\[
C = \{1,2,\dots,r\},\qquad I = \{r+1,r+2,\dots,n\}.
\]
We prove that $C$ is a clique and $I$ is an independent set.

First, $I$ is independent. If not, there exist $r<a<b\leq n$ with $ab\in E(G)$. Then $a$ is the smaller endpoint of edge $ab$ and $a>r$, contradicting the maximality of $r$.

Next, $C$ is a clique. Take arbitrary $1\leq a<b\leq r$. Since there exists $t>r$ with $rt\in E(G)$ and $a<r<t$, Lemma~\ref{lem:0*1} implies $ar\in E(G)$. In particular, for any $b<r$, we also have $br\in E(G)$. If $b=r$, then we already have $ar\in E(G)$, i.e., $ab\in E(G)$. If $b<r$, then from $br\in E(G)$ and $a<b<r$, Lemma~\ref{lem:0*1} again gives $ab\in E(G)$. Hence every pair $1\leq a<b\leq r$ satisfies $ab\in E(G)$, so $C$ is a clique.

Thus $V(G) = C \sqcup I$ where $C$ is a clique and $I$ is an independent set. Therefore $G$ is a split graph.
\end{proof}
\begin{corollary}\label{Gsplit}
    Let $G$ be a graph of order $n$. If there exists $1\leq k\leq n-1$ such that $s_k(G)=B_k(G),$ then $G$ is a split graph.
\end{corollary}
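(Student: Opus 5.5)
The corollary should follow by chaining Theorem~\ref{reproduce}, Lemma~\ref{Pii} and Theorem~\ref{split}, with one extra step to handle isolated vertices.

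\textbf{Step 1 (the projection).} Suppose $s_k(G)=B_k(G)=e(G)+\binom{k+1}{2}$ for some $1\le k\le n-1$. Label $V(G)=[n]$. Theorem~\ref{reproduce} gives an $n\times n$ orthogonal projection $P$ of rank $k$ with $P\mathbf 1=\mathbf 0$ and
\[
\sum_{ij\in E(G)}(P_{ii}+P_{jj}-2P_{ij}-1)=\binom{k+1}{2}.
\]

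\textbf{Step 2 (sign pattern).} By Lemma~\ref{Pii}(i)--(iii), every $i$ satisfies $0\le P_{ii}<1$. Moreover, for $i\neq j$, $ij\in E(G)$ iff $P_{ij}=\max(P_{ii},P_{jj})-1$, and $ij\notin E(G)$ iff $P_{ij}=\min(P_{ii},P_{jj})$. The only way $(P,G)$ can fail to be a good pair is that some $P_{ii}=0$. By Lemma~\ref{Pii}(iv), such an $i$ is an isolated vertex.

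\textbf{Step 3 (isolated vertices).} I see two options.
\begin{itemize}
\item[(a)] Check that Lemmas~\ref{lem:001} and~\ref{lem:011}, Corollary~\ref{lem:0*1} and Theorem~\ref{split} go through when $0\le P_{ii}<1$ is allowed. The strict inequality $P_{kk}>0$ is used in Lemma~\ref{lem:001}. There, however, $jk\in E(G)$ is assumed, so by Lemma~\ref{Pii}(iv) $P_{kk}>0$ holds automatically. In Lemma~\ref{lem:011}, the vertices $j$ and $k$ are non-isolated for the same reason. The complement step only needs $Q_{ii}=1-P_{ii}\in(0,1]$, so it must be checked that $Q_{ii}=1$ is never used on a vertex whose strict bound matters.
\item[(b)] Remove the isolated vertex set $Z$ directly, which is cleaner. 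For $i\in Z$ we have $Pe_i=\mathbf 0$, so row and column $i$ of $P$ vanish. Let $G'=G-Z$ on $n'=n-|Z|$ vertices and let $P'$ be the principal submatrix on $V\setminus Z$. Then $P'$ is still an orthogonal projection, $P'\mathbf 1=\mathbf 0$, and $\operatorname{tr}P'=k$, so its rank is $k$. The edge sum is unchanged, and now every $P'_{ii}>0$. This gives $k\le n'$. If $k=n'$, then $P'=I$, contradicting $P'\mathbf 1=\mathbf 0$ (unless $n'=0$, in which case $G$ is edgeless). So $1\le k\le n'-1$ and $(P',G')$ is a good pair by Lemma~\ref{Pii}.
\end{itemize}

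\textbf{Step 4 (conclusion).} Theorem~\ref{split} makes $G'$ a split graph with partition $C\sqcup I$. Adding the isolated vertices of $Z$ to $I$ keeps $I$ independent, so $G$ is split. If $G$ has no edges it is trivially split.

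The main obstacle is this bookkeeping around vertices with $P_{ii}=0$, i.e.\ making sure the good-pair hypothesis $P_{ii}\in(0,1)$ is actually available. Route (b) handles it; the rest is direct citation.
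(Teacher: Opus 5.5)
Your proof is correct and follows the paper's own argument. Theorem~\ref{reproduce} supplies the projection, and Lemma~\ref{Pii} gives the good-pair relations once the vertices with $P_{ii}=0$ (which are isolated) are set aside; Theorem~\ref{split} plus putting those isolated vertices back on the independent side finishes it. Your route (b) takes $Z=\{i:P_{ii}=0\}$ and checks that the principal submatrix $P'$ off $Z$ is still a rank-$k$ projection with $P'\mathbf 1=\mathbf 0$ and $1\le k\le n'-1$; this simply makes explicit the paper's phrase ``after ignoring isolated vertices.''
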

\begin{proof}
Let \(V(G)=[n]\). By Theorem \ref{reproduce}, if $s_k(G)=B_k(G)$, then there exists an $n\times n$ orthogonal projection matrix $P=(P_{ij})$ of rank $k$ with $P\mathbf 1=\mathbf 0$, such that
\[\sum_{ij\in E(G)}(P_{ii}+P_{jj}-2P_{ij}-1)=\binom{k+1}{2}.\]Then by Lemma \ref{Pii}, after ignoring isolated vertices, \((P,G)\) becomes a good pair. By
Theorem \ref{split}, the non-isolated part of \(G\) is split, and
adding isolated vertices preserves being split. Hence \(G\) is a split
graph.
\end{proof}
\section{Characterization of extremal split graphs}
\subsection{A refined lemma for split graphs}
\begin{lemma}[An improved form of Lemma 6 of \cite{Bai2011}]
\label{lem:improved-bai-lemma6}
Let \(G\) be a split graph with a fixed split partition
\[
V(G)=C\sqcup I,
\]
where \(C\) is a clique of size \(N\), and
\(I\) is an independent set of size $M$. If \(\lambda_N(G)\ge N\), then
\[
s_N(G)\le D_N(G).
\]
Moreover, if equality holds, then the family of neighbourhoods \(
\{N_C(y):y\in I\}
\) is linearly ordered by inclusion; that is, for any two vertices
\(y_1,y_2\in I\), either \(N_C(y_1)\subseteq N_C(y_2)\) or
\(N_C(y_2)\subseteq N_C(y_1)\).
\end{lemma}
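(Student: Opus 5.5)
We recall Bai's argument for Lemma~6 and track where slack can occur. Order the vertices with $C=[N]$ first, and write $L(G)=\begin{pmatrix} L_C & -B\\ -B^T & D_I\end{pmatrix}$. Here $B$ is the $N\times M$ biadjacency matrix between $C$ and $I$, $L_C=NI_N-J_N+\operatorname{diag}(b_1,\dots,b_N)$ with $b_x=|N_I(x)|$, and $D_I=\operatorname{diag}(|N_C(y)|)_{y\in I}$.

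The plan is to compare $s_N(G)$ with $\operatorname{tr}$ of a suitable $N$-dimensional compression. Since $\lambda_N\ge N$, consider the matrix $L(G)-NI_n$; its $N$ largest eigenvalues are exactly $\lambda_i-N\ge0$, so
\[
s_N(G)-N^2=\sum_{i=1}^n(\lambda_i-N)_+=\max_{0\preceq X\preceq I}\operatorname{tr}\bigl(X(L-NI)\bigr).
\]
Bai's route is to bound $\sum_i(\lambda_i-N)_+$ by exhibiting $L-NI\preceq A$ for some explicit PSD-dominating matrix $A$. The trace of $A_+$ should then equal $D_N(G)-N^2=\sum_{y\in I}|N_C(y)|+\sum_x b_x-\dots$. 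It is more concrete to use the identity $D_N(G)=N\cdot N+2e(C,I)-\dots$; we would verify directly via Lemma~\ref{lem:berndsen-split} (when $I$ is maximal) that $D_N=e(G)+\binom{N+1}{2}$. In general $D_N$ is $N(N-1)+\sum_{x\in C}b_x+\sum_{y}|N_C(y)|\cdot[\,\cdot\,]$, computed from the definition of conjugate degrees.

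The key intermediate bound will be obtained by writing $L-NI$, restricted suitably, as a sum of rank-one or star pieces, one for each $y\in I$. For each $y$, the star from $y$ to $N_C(y)$ contributes a Laplacian $L_y$. We then use the Ky Fan inequality $\sum(\lambda_i(\sum_y L_y+L_{K_N}))\le$ sums of top eigenvalues after grouping. The cleanest implementation that we would attempt is a majorization step. The vector of diagonal contributions $(b_x)_{x\in C}$ is majorized by the conjugate-type vector, and this becomes an equality exactly when the sets $N_C(y)$ form a chain. The chain condition is precisely the Ferrers-diagram (threshold) situation, where the $0/1$ matrix $B$ has the maximal row-sum vector in the dominance order given its column sums, by the Gale--Ryser theorem. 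Thus the inequality will be reduced to $\sum_{x\in C}(\text{top contributions})\le\sum_{j}\min(\cdot)$. By Gale--Ryser, the row sums of $B$ are majorized by the conjugate of its column sums, $\bigl(|N_C(y)|\bigr)^*$. The equality statement then becomes: the row-sum vector equals the conjugate of the column-sum vector. This holds iff $B$ is, up to permutation, a Ferrers matrix, i.e.\ iff $\{N_C(y)\}$ is linearly ordered by inclusion.

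The main obstacle will be the first step: passing from $s_N(G)$ to a quantity depending only on the diagonal (degree) data of $C$ plus $e(C,I)$, with equality conditions we can control. Here we would use $\lambda_N\ge N$ together with the trace identity $s_N=\operatorname{tr}L-\sum_{i>N}\lambda_i$ and Schur's majorization of the diagonal of the $I$-block. Namely, we would show $\sum_{i>N}\lambda_i\ge\sum_{y\in I}|N_C(y)|-(\text{correction})$ via interlacing for the Schur complement of the $C$-block. We expect the slack there to vanish automatically under equality, so that all equality information is forced into the Gale--Ryser step. If that fails, we would instead track equality in Cauchy interlacing and show that it forces $B$ to be a Ferrers matrix.
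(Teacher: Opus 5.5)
Your proposal has a genuine gap: neither half of the statement is actually established.

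\textbf{The inequality.} The bound $s_N\le D_N$ is precisely Bai's Lemma~6, the hard core of the Grone--Merris--Bai theorem. The step you flag as the main obstacle is never supplied: either a lower bound on $\sum_{i>N}\lambda_i$ via a Schur complement, or a matrix dominating $L-NI_n$ whose positive part has trace $D_N-N^2$.

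The star decomposition you sketch is far too lossy. Ky Fan with $s_N(L(K_{1,f}))=2f$ for $f\le N$ only yields $s_N(G)\le N(N-1)+2e(C,I)$. That is weaker than the target $N^2+e(C,I)$ whenever $e(C,I)>N$.

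The paper does not reprove the inequality; it cites Bai. It first checks Bai's hypothesis: either $\lambda_N>N$, or $\lambda_N=N>\Delta_I$ with $\Delta_I=\max_{y\in I}|N_C(y)|$. A vertex $y\in I$ complete to $C$ would give a principal submatrix $L(K_{N+1})+\operatorname{diag}(r_v)$ with $r_v\ge0$, forcing $\lambda_N(G)\ge N+1$ by interlacing.

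It also uses that $\lambda_N\ge N$ forces every $x\in C$ to have a neighbour in $I$. Otherwise $C\setminus\{x\}$ is the clique of another split partition, and Bai's Proposition~5 gives $\lambda_N\le N-1$. This is what gives $D_N=N^2+e(C,I)$. Your expressions for $D_N$ are left with placeholders and miss this.

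\textbf{The equality case.} The combinatorial equivalence you end with is correct: the row sums equal the conjugate of the column sums iff $B$ is Ferrers iff the sets $N_C(y)$ form a chain. But nothing in your plan connects spectral equality to it, and Gale--Ryser cannot carry that information in the form you propose.

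Here $D_N=N^2+e(C,I)$ depends only on the number of $C$--$I$ edges. The row sums of $B$ and the conjugate of its column sums always have the same total $e(C,I)$. So any comparison summed over all of $C$ is automatically tight and cannot distinguish Ferrers from non-Ferrers $B$. The slack lives only in intermediate partial sums, which your plan never ties to $s_N$. For $P_4$ with $C$ the two middle vertices, $D_2-s_2=2-\sqrt2$, whereas the Gale--Ryser deficit in the first partial sum is $1$.

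What is missing is a spectral certificate rigid enough to force structure at equality. The paper extracts it from Bai's homotopy: a matrix $V\in\mathbb R^{M\times N}$ with $v_{ji}\le0$ and column sums $-1$, the trace identity (\ref{trace}), and the quadratic relation (\ref{vjirelation}), where $A=(a_{ij})$ is the biadjacency matrix (your $B$). The argument then runs:
\begin{itemize}
\item Equality gives $\operatorname{tr}(AV)=-N$, which forces $v_{ji}=0$ whenever $a_{ij}=0$.
\item Take incomparable $N_C(y_{j_1}),N_C(y_{j_2})$ with witnesses $x_{i_1},x_{i_2}$. Relation (\ref{vjirelation}) at $(j_1,i_2)$ forces $v_{j_1i_1}v_{j_2i_2}=0$.
\item Relation (\ref{vjirelation}) at $(j_1,i_1)$ or $(j_2,i_2)$ then yields the contradiction $0\le-1$.
\end{itemize}
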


\begin{proof}
We use the notation and the homotopy argument of \cite{Bai2011}. Denote by \(C=\{x_1,\ldots,x_N\}\) the clique of size \(N\), and
\(I=\{y_1,\ldots,y_M\}\) the independent set of size $M$. Let \(A=(a_{ij})\) be the \(N\times M\) bipartite adjacency matrix between
\(C\) and \(I\), where
\[
a_{ij}=1
\quad\Longleftrightarrow\quad
x_i y_j\in E(G).
\]
Let
\[
D_C=\operatorname{diag}(d_1,\ldots,d_N),
\qquad
D_I=\operatorname{diag}(f_1,\ldots,f_M),
\]
where
\[
d_i=|N_I(x_i)|,
\qquad
f_j=|N_C(y_j)|.
\]
Thus \(d_i\) is the number of neighbours of \(x_i\) in \(I\), and \(f_j\) is
the degree of \(y_j\). With this notation,
\[
L(G)=
\begin{pmatrix}
K_N+D_C & -A\\
-A^T & D_I
\end{pmatrix},
\]
where \(K_N\) is the Laplacian matrix of the complete graph on \(N\) vertices.
Put
\[
\Delta_I:=\max_{1\le j\le M} f_j\leq N.
\]
We first show that the hypothesis \(\lambda_N(G)\ge N\) implies the spectral
hypothesis used in Bai's Lemma 6. In other words, we first show that  \[
\lambda_N(G)\ge N\;\Longleftrightarrow \text{Either $\lambda_N(G)>N$ or $\lambda_N(G)=N>\Delta_I$}.
\]We remains to prove that
\[
\lambda_N(G)=N\;\Rightarrow \Delta_I<N.
\]
Assume to the contrary that \(\lambda_N(G)=N=\Delta_I\), then there exists
\(y\in I\) adjacent to every vertex of \(C\). Hence
\[
S:=C\cup\{y\}
\]
induces a complete graph \(K_{N+1}\). The principal submatrix \(L(G)[S]\)
satisfies
\[
L(G)[S]=L(K_{N+1})+\operatorname{diag}(r_v:v\in S),
\]
where
\[
r_v=d_G(v)-d_{G[S]}(v)\ge0.
\]
Therefore $\operatorname{diag}(r_v:v\in S)$ is semi-definite. By the Courant--Fischer min-max principle, the $N$-th largest eigenvalue
\[
\lambda_N(L(G)[S])\ge \lambda_N(K_{N+1})=N+1.
\]
By Cauchy interlacing for principal submatrices,
\[
\lambda_N(G)\ge \lambda_N(L(G)[S])\ge N+1,
\]
contradicting \(\lambda_N(G)=N\). Thus the assumptions of Bai's Lemma 6 are satisfied. Consequently,
\[
s_N(G)\le D_N (G).
\]

It remains to extract the equality information from Bai's proof. Under the
above spectral hypothesis, Bai's Lemma 8 gives the spectral separation needed
for the homotopy method. Bai's Lemmas 9--11 then yield a matrix
\[
V=(v_{ji})\in\mathbb R^{M\times N}
\]
corresponding to the invariant subspace spanned by the first \(N\) eigenvectors
of \(L(G)\), with the following properties:
\begin{equation}\label{vjidef}
v_{ji}\le0
\quad\text{for all }j,i,
\qquad
\sum_{j=1}^M v_{ji}=-1
\quad\text{for every }i,
\end{equation}
and
\begin{equation}\label{vjirelation}
v_{ji}(N+d_i)=-a_{ij}+f_jv_{ji}-\sum_{p=1}^N\sum_{q=1}^Mv_{jp}(1-a_{pq})v_{qi}\;\ \text{for all $j,i.$}
\end{equation}
Moreover, Bai's trace computation gives
\begin{equation}\label{trace}
\sum_{i=1}^N\lambda_i(G)
=
\operatorname{tr}(K_N+D_C-AV)
=
N(N-1)+e(C,I)-\operatorname{tr}(AV).
\end{equation}
Here \(e(C,I)=\sum_i d_i=\sum_j f_j\).

We also need the following consequence of \(\lambda_N(G)\ge N\). We claim that every vertex
of \(C\) has degree at least \(N\). Suppose to the contrary some \(x_i\in C\) had degree less
than \(N\), then \(x_i\) would have no neighbour in \(I\), and hence \(G\) could
also be viewed as a split graph with clique \(C\setminus\{x_i\}\) of size
\(N-1\). Applying Bai's Proposition 5 to this new split partition gives
\[
\lambda_N(G)=\lambda_{(N-1)+1}(G)\le N-1,
\]
contradicting \(\lambda_N(G)\ge N\). Therefore
\[
d_G(x_i)\ge N
\quad\text{for all }x_i\in C.
\]
Since every vertex of \(I\) has degree at most \(N\), we have
\begin{equation}\label{d^*}
\sum_{i=1}^N d_i^*(G)
=
\sum_{v\in V(G)}\min\{d_G(v),N\}
=
N^2+e(C,I).
\end{equation}
Now assume that equality holds:
\[
\sum_{i=1}^N\lambda_i(G)=\sum_{i=1}^N d_i^*(G).
\]
Combining (\ref{trace}) and (\ref{d^*}), we obtain
\begin{equation}\label{AV}
-\operatorname{tr}(AV)=N.
\end{equation}
On the other hand, by (\ref{vjidef}),
\[
\operatorname{tr}(AV)
=
\sum_{i=1}^N\sum_{j=1}^M a_{ij}v_{ji}
\ge
\sum_{i=1}^N\sum_{j=1}^M v_{ji}
=
-N.
\]
Hence equality in (\ref{AV}) forces
\begin{equation}\label{vjineed}
a_{ij}=0
\quad\Longrightarrow\quad
v_{ji}=0.
\end{equation}
We now prove that the family of neighbourhoods \(
\{N_C(y):y\in I\}
\) is linearly ordered by inclusion. Suppose
not. Then there exist \(y_{j_1},y_{j_2}\in I\) whose neighbourhoods in \(C\) are
incomparable. Hence we may choose vertices \(x_{i_1},x_{i_2}\in C\) such that
\[
x_{i_1}y_{j_1}\in E(G),\;\ x_{i_2}y_{j_2}\in E(G),\;\ x_{i_1}y_{j_2}\notin E(G),\;\ x_{i_2}y_{j_1}\notin E(G).
\]
Equivalently,
\[
a_{i_1j_1}=1,\quad a_{i_2j_2}=1,\quad a_{i_1j_2}=0,\quad a_{i_2j_1}=0.
\]
By (\ref{vjineed}), \(a_{i_2j_1}=0\) implies \(
v_{j_1i_2}=0
\). Taking $j=j_1,\ i=i_2$ in (\ref{vjirelation}), and using \(a_{i_2j_1}=0\), \(v_{j_1i_2}=0\), we
get
\begin{equation}\label{i2j1}
0=-\sum_{p=1}^N\sum_{q=1}^Mv_{j_1p}(1-a_{pq})v_{qi_2}.
\end{equation}
By (\ref{vjidef}), each summand is non-negative. Hence (\ref{i2j1}) forces
\begin{equation}\label{nonedge}
    v_{j_1p}(1-a_{pq})v_{qi_2}=0\;\ \text{for all $p,\ q$.}
\end{equation}
Taking $p=i_1,\ q=j_2$ in (\ref{nonedge}) and using $a_{i_1j_2}=0$ yields
\begin{equation}\label{youbian}
    v_{j_1i_1}v_{j_2i_2}=0.
\end{equation}
Hence $v_{j_1i_1}=0$ or $v_{j_2i_2}=0$. Without loss of generality, assume $v_{j_1i_1}=0$. Taking $j=j_1,\ i=i_1$ in (\ref{vjirelation}), we get
\[0=-1-\sum_{p=1}^N\sum_{q=1}^Mv_{j_1p}(1-a_{pq})v_{qi_1}\leq -1,
\]which is a contradiction. Therefore no two vertices of \(I\) have incomparable
neighbourhoods in \(C\). Hence \(
\{N_C(y):y\in I\}
\) is linearly ordered by inclusion.
\end{proof}
\subsection{Equality case for split graphs}
\begin{theorem}
\label{Case when k=N}
Let \(G\) be a split graph of order $n$. Choose a split partition \(V(G)=C\sqcup I\), where \(C\) is a clique, \(I\) is a maximal independent set. Put \(N:=|C|\).
Then
\[s_N(G)=B_N(G)\]
if and only if \(G\) is a threshold graph with clique number \(N+1\).
Equivalently, \(G\in \mathcal{G}_{N,r,s}\) for some \(r\geq 1\) and \(s\geq 0\).
\end{theorem}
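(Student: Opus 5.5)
The plan is to prove both directions separately. The converse direction carries the substance: it passes from the equality $s_N=B_N$ to the hypothesis of Lemma~\ref{lem:improved-bai-lemma6} and then reads off the threshold structure.

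\emph{Sufficiency.} Suppose $G$ is threshold with clique number $N+1$. In a threshold graph the neighbourhoods $\{N_C(y):y\in I\}$ are nested. Since $I$ is maximal, every vertex of $C$ has a neighbour in $I$. Hence the union of these neighbourhoods, which equals the largest one, is all of $C$. Put $R:=\{y\in I: N_C(y)=C\}$; this set is nonempty. Put $S:=I\setminus R$; the neighbourhoods of its vertices are proper subsets of $C$ and nested. With $U=C$ this exhibits $G\in\mathcal G_{N,r,s}$ with $r\ge 1$, so Theorem~\ref{thm:LiGuo-split} gives $s_N(G)=e(G)+\binom{N+1}{2}=B_N(G)$. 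The same reasoning shows that the two descriptions in the statement (threshold with clique number $N+1$, and membership in some $\mathcal G_{N,r,s}$) agree for this partition.

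\emph{Necessity.} Assume $s_N(G)=B_N(G)$; we may assume $G$ has an edge. First, Lemma~\ref{lem:berndsen-split} gives $B_N(G)=D_N(G)$, so $s_N(G)=D_N(G)$. Second, we show $\lambda_N(G)\ge N$. By the Grone--Merris--Bai theorem, $s_{N-1}(G)\le D_{N-1}(G)$, hence
\[
\lambda_N(G)=s_N(G)-s_{N-1}(G)\ge D_N(G)-D_{N-1}(G)=d_N^*(G).
\]
Since $I$ is maximal, each vertex of $C$ has degree at least $(N-1)+1=N$. Therefore $d_N^*(G)\ge N$, and so $\lambda_N(G)\ge N$.

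With $\lambda_N(G)\ge N$ established, Lemma~\ref{lem:improved-bai-lemma6} applies, and its equality case says that $\{N_C(y):y\in I\}$ is linearly ordered by inclusion. Together with $C$ a clique and $I$ independent, nested neighbourhoods from $I$ into $C$ make $G$ a threshold graph. Indeed, vertex degrees then order the vertices compatibly with the nesting, and $G$ can be built by repeatedly adding dominating or isolated vertices. Finally, maximality of $I$ again forces the largest neighbourhood $N_C(y)$ to be all of $C$, so $C\cup\{y\}$ is a clique of order $N+1$. A split graph with clique part $C$ of size $N$ has clique number at most $N+1$, since any clique meets $I$ in at most one vertex. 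Hence the clique number is exactly $N+1$, and the description via $\mathcal G_{N,r,s}$ follows as in the sufficiency part.

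The step I expect to need the most care is verifying $\lambda_N(G)\ge N$, because it is the only way to reach the hypothesis of Lemma~\ref{lem:improved-bai-lemma6}. The plan is to get it from the difference of the Grone--Merris--Bai upper bound at $N-1$ and the equality at $N$, as above. The remaining steps are bookkeeping about threshold graphs and maximal independent sets.
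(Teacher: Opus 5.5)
Your proof is correct and follows the paper's argument step for step: $B_N=D_N$ from Lemma~\ref{lem:berndsen-split}, then $\lambda_N(G)\ge d_N^*(G)\ge N$ by subtracting the Grone--Merris--Bai bound at $N-1$, then the equality case of Lemma~\ref{lem:improved-bai-lemma6} for nestedness, and finally maximality of $I$ for the clique of order $N+1$. Your explicit sufficiency direction via Theorem~\ref{thm:LiGuo-split} fills in what the paper's proof leaves implicit; the edgeless case $N=0$ should be checked directly, since that theorem needs $k\ge 1$.
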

\begin{proof}
If $N=0$, which means $e(G)=0$, then $s_0(G)=B_0(G)=0$ and $G$ is a threshold graph with clique number $1$. So assume $N\geq 1$.

By Lemma \ref{lem:berndsen-split} and the Grone-Merris-Bai inequality,
$$s_N(G)=D_N(G)=B_N(G),$$
$$s_{N-1}(G)\leq D_{N-1}(G).$$
Since all $N$ vertices of $C$ have degree at least $N$, taking the difference of them yields $\lambda_N(G)\geq d_N^*(G)\geq N.$ Hence by Lemma \ref{lem:improved-bai-lemma6}, the family of neighbourhoods \(
\{N_C(y):y\in I\}
\) is linearly ordered by inclusion, which means $G$ is a threshold graph. It remains to prove that $G$ is of clique number $N+1$, i.e., there exists $v\in I$ such that $e(v,C)=N$. Since $\{N_C(y):y\in I\}$ is linearly ordered by inclusion, set $I=\{v_1,\ldots,v_{n-N}\}$ such that $$N_C(v_1)\supseteq N_C(v_2)\cdots\supseteq N_C(v_{n-N}).$$
By the maximality of $I$, for every $x\in C$, there exists $i$ such that $x\in N_C(v_i)\subseteq N_C(v_1)$. Hence $e(v_1, C)=N$, which completes the proof.
\end{proof}
\begin{theorem}
\label{Case when knot N}
Let \(G\) be a split graph of order $n$. Choose a split partition \(V(G)=C\sqcup I\), where \(C\) is a clique, \(I\) is a maximal independent set. Put \(N:=|C|\).
Then for $1\leq k\leq n-1$ and $k\neq N$,
\[s_k(G)<B_k(G).\]
\end{theorem}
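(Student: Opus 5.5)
We argue through the gap function $f_k(G)=B_k(G)-D_k(G)$ and the Grone--Merris--Bai upper bound $s_k(G)\le D_k(G)$. This gives
\[
s_k(G)\le D_k(G)=B_k(G)-f_k(G).
\]

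\textbf{Edgeless case.} If $G$ has no edges, then $s_k(G)=0<\binom{k+1}{2}=B_k(G)$, so we may assume $G$ has at least one edge and $N\ge1$.

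\textbf{Reduction to $k=N-1$.} By Lemma~\ref{lem:berndsen-basic}(ii) and Lemma~\ref{lem:berndsen-split}, the sequence $f_0,\dots,f_n$ is nonnegative with minimum $f_N(G)=0$. Whenever $f_k(G)>0$ we get $s_k(G)<B_k(G)$ immediately. By Lemma~\ref{lem:zeros-of-f}, the only $k\neq N$ with $f_k(G)=0$ is $k=N-1$, and this happens only when $d_N^*(G)=N$. So it remains to treat $k=N-1\ge1$ under the assumption $d_N^*(G)=N$, and to show that $s_{N-1}(G)=D_{N-1}(G)$ is impossible there.

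\textbf{Two consequences of $d_N^*(G)=N$.}
\begin{enumerate}
\item[(a)] Every $x\in C$ has a neighbour in $I$, by maximality of $I$. Hence $d_G(x)\ge N$ for all $x\in C$, and these $N$ vertices already account for $d_N^*(G)=N$.
\item[(b)] Consequently no $y\in I$ can be adjacent to all of $C$. Such a $y$ would have degree $N$ and force $d_N^*(G)\ge N+1$.
\end{enumerate}

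\textbf{Passing to the complement.} Now suppose, for contradiction, that $s_{N-1}(G)=D_{N-1}(G)$. By Lemma~\ref{lem:complement-GMB} with $k=N-1$, this is equivalent to
\[
s_{n-N}(\overline G)=D_{n-N}(\overline G).
\]
The complement $\overline G$ is split, with clique $I$ of size $n-N\ge1$ ($I\neq\emptyset$ since $I$ is maximal independent) and independent set $C$. By (b), every vertex of $I$ has a non-neighbour in $C$ in $G$, i.e. a neighbour in $C$ in $\overline G$. Thus $C$ is a maximal independent set of $\overline G$, and $\overline G$ has an edge.

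Lemma~\ref{lem:berndsen-split} applied to $\overline G$ gives $D_{n-N}(\overline G)=B_{n-N}(\overline G)$, so $s_{n-N}(\overline G)=B_{n-N}(\overline G)$. Theorem~\ref{Case when k=N}, applied to $\overline G$ with the partition $I\sqcup C$, then shows that $\overline G$ is a threshold graph with clique number $n-N+1$. Hence some $x\in C$ is adjacent in $\overline G$ to every vertex of $I$, so in $G$ it has no neighbour in $I$. This gives $d_G(x)=N-1$, contradicting (a). Therefore $s_{N-1}(G)<D_{N-1}(G)=B_{N-1}(G)$.

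\textbf{Main obstacle.} The delicate step is this last case $k=N-1$, where the degree-sequence bound is tight and a spectral argument is required. The key point is verifying that the hypotheses of Theorem~\ref{Case when k=N} genuinely transfer to $\overline G$, in particular that $C$ is maximal independent there, which is exactly what $d_N^*(G)=N$ provides through (b).
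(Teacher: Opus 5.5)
Your proof is correct and follows essentially the same route as the paper. You use $s_k\le D_k\le B_k$ and Lemma~\ref{lem:zeros-of-f} to reduce to $k=N-1$, pass to $\overline G$ via Lemma~\ref{lem:complement-GMB}, and apply Theorem~\ref{Case when k=N} to $\overline G$ to obtain a vertex of $C$ with no neighbour in $I$. Your version is slightly more careful than the paper's. You treat the edgeless case separately, and you explicitly check, via $d_N^*(G)=N$, that $C$ is a maximal independent set of $\overline G$ and that $D_{n-N}(\overline G)=B_{n-N}(\overline G)$, which the paper uses without verification.
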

\begin{proof}
Assume, to the contrary, that $s_k(G)=B_k(G)$ for some $k\neq N,\ 1\leq k\leq n-1$. By Lemma \ref{lem:berndsen-split} and the Grone-Merris-Bai inequality, $$s_k(G)=D_k(G)=B_k(G).$$Thus $f_k(G)=0$. By Lemma \ref{lem:zeros-of-f}, $k=N-1$. By Lemma \ref{lem:complement-GMB},
$$s_{n-N}(\overline G)=D_{n-N}(\overline G).$$
By Theorem \ref{Case when k=N}, $\overline G$ is a threshold graph of clique number $n-N+1$, which means there exists $x\in C$ such that $e_{\overline G}(x,I)=n-N$. That is, $e_G(x,I)=0$, contradicting to the maximality of $I$.
\end{proof}
\section{Proof of the full Brouwer conjecture}
\begin{pot}
Suppose first that equality holds. By Corollary \ref{Gsplit}, \(G\) is a split
graph. Then by Theorems \ref{Case when k=N} and \ref{Case when knot N}, $G\in \mathcal{G}_{k,r,s}$. In other words, $G$ is a threshold graph of clique number $k+1$.

Conversely, if \(G\) is a threshold graph with clique number \(k+1\), then
by Theorem \ref{thm:LiGuo-split}, $s_k(G)=B_k(G)$. This proves the equality condition.
\end{pot}
\subsection*{Acknowledgements}
This work is partly supported by the National Natural Science Foundation of China (No.12371354, W2521102), the Montenegrin-Chinese Science and Technology Cooperation Project (No.4-3)  and  the Science and Technology Commission of Shanghai Municipality (No.25LN3200600).

\bibliographystyle{cas-name}

\bibliography{cas-refs}
\end{document}